\documentclass[11pt]{amsart}
\usepackage{float}
\usepackage{graphicx}
\usepackage{amssymb}
\theoremstyle{plain}
\newtheorem{thm}{Theorem}[section]
\newtheorem{lemma}[thm]{Lemma}

\newtheorem{prop}[thm]{Proposition}

\theoremstyle{definition}

\newtheorem{example}[thm]{Example}

\def\card{\mathop{\hbox {card}}\nolimits}

\newcommand{\frg}{\mathfrak{g}}

\newcommand{\frk}{\mathfrak{k}}

\newcommand{\frp}{\mathfrak{p}}

\newcommand{\frt}{\mathfrak{t}}

\newcommand{\frsp}{\mathfrak{sp}}

\newcommand{\bbH}{\mathbb{H}}

\newcommand{\bbR}{\mathbb{R}}

\newcommand{\bbZ}{\mathbb{Z}}

\begin{document}

\title[Distribution of spin norm along pencils]
{Distribution of spin norm along pencils: the $Sp(p, q)$ case}

\author{Chao-Ping Dong}
\address[Dong]{School of Mathematical Sciences, Soochow University, Suzhou 215006,
P.~R.~China}
\email{chaopindong@163.com}

\author{Zhan Ying}
\address[Ying]{School of Mathematical Sciences, Soochow University, Suzhou 215006,
P.~R.~China}
\email{1171049153@qq.com}

\abstract{As a sequel to \cite{D16} and Theorem C of \cite{D20}, this paper shows that for $Sp(p,q)$, the spin norm strictly increases along any Vogan pencil once it goes beyond the unitarily small convex hull.}

\endabstract

\subjclass[2010]{Primary 17B20, 05E18}

\keywords{Dirac cohomology, spin norm,  u-small $K$-type, Vogan pencil}

\maketitle


\section{Introduction}
Throughout this paper, we fix $G$ as $Sp(p, q)$, which is \emph{not} Hermitian symmetric and has a maximal compact subgroup $K:=Sp(p)\times  Sp(q)$.  We assume that $p\leq q$ and put $n:=p+q$. Let $\frg$ (resp., $\frk$) be the complexified Lie algebra of $G$ (resp., $K$). Let $\frg=\frk+\frp$ be the Cartan decomposition. By a $K$-type, we mean an irreducible representation of $K$.
We may and we will identify a $K$-type with its highest weight. For example, $\frp$ is a $K$-type with highest weight $\beta$. Let $T$ be a maximal torus of $K$. Fix
\begin{equation}\label{k-posroots}
\Delta^+(\frk, \frt):=\{e_i\pm e_j\mid 1\leq i<j\leq p \mbox{ or } p+1\leq i<j\leq n\}\cup
\{2e_i\mid 1\leq i\leq n\}.
\end{equation}

Let $\pi$ be an infinite-dimensional irreducible representation of $G$.
A result of Vogan in \cite{V80} says that the $K$-types of $\pi$ is a union of \emph{pencils}:
\begin{equation}\label{pencil}
P(\mu):=\{\mu+ m\beta \mid m\in \bbZ_{\geq 0}\}.
\end{equation}
Here $\mu$ is the highest weight of a $K$-type. The \emph{unitarily small convex hull} was introduced by Salamanca-Riba and Vogan in \cite{SV}. It will be recalled in Section \ref{sec-pre}. We say $\mu$ is \emph{u-small} if it lies within the unitarily small convex hull; otherwise, we say $\mu$ is \emph{u-large}.

\begin{thm}\label{thm-reduction-pencil-sppq}
For $Sp(p, q)$, the
spin norm increases strictly along any pencil once it goes beyond
the u-small convex hull. Namely,  for any u-large weight $\mu$ such
that $\mu-\beta$ is dominant, we have
\begin{equation}\label{pencil-reduction-step-complex}
\| \mu \|_{\mathrm{spin}} > \|\mu-\beta \|_{\mathrm{spin}}.
\end{equation}
\end{thm}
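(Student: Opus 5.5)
The plan is to compare the two spin norms chamber by chamber. Recall from Section~\ref{sec-pre} that
\[
\|\mu\|_{\mathrm{spin}}=\min_j \bigl\|\{\mu-\rho_n^{(j)}\}+\rho\bigr\|,
\]
where $j$ runs over the positive systems $\Delta^+(\frg,\frt)^{(j)}$ containing $\Delta^+(\frk,\frt)$, and $\{\cdot\}$ denotes the $\Delta^+(\frg,\frt)$-dominant $W(\frg,\frt)$-conjugate. Since the left-hand side of \eqref{pencil-reduction-step-complex} is itself a minimum over $j$, it suffices to prove that for \emph{every} $j$
\[
\bigl\|\{\mu-\rho_n^{(j)}\}+\rho\bigr\|^2>\|\mu-\beta\|_{\mathrm{spin}}^2 .
\]

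Fix $j$ and put $\lambda:=\mu-\beta-\rho_n^{(j)}$. Let $\Lambda:=\{\lambda\}=w\lambda$ with $w\in W(\frg,\frt)$. Then $\{\mu-\rho_n^{(j)}\}=\{\lambda+\beta\}$ is the dominant conjugate of $\Lambda+w\beta$. The elementary fact driving the argument is that $\|\{x\}+\rho\|\geq \|x+\rho\|$ for every $x$, because $\langle \{x\},\rho\rangle\geq\langle x,\rho\rangle$. Applying it with $x=\Lambda+w\beta$ and using $\|\beta\|^2=2$ (here $\beta=e_1+e_{p+1}$) gives
\[
\bigl\|\{\mu-\rho_n^{(j)}\}+\rho\bigr\|^2\geq \|\Lambda+\rho\|^2+2\langle\Lambda+\rho,w\beta\rangle+2 .
\]
Since $\|\Lambda+\rho\|\ge\|\mu-\beta\|_{\mathrm{spin}}$, we are done for this $j$ whenever $\langle\Lambda+\rho,w\beta\rangle>-1$. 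This holds automatically if $w\beta$ is a positive root, since $\Lambda+\rho$ is regular dominant. Moreover the pairing is a half-integer in our normalization, so in general it remains to exclude $\langle\Lambda+\rho,w\beta\rangle\leq -1$.

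The main obstacle is the set of chambers $j$ for which $w\beta$ is a negative root, i.e.\ moving $\mu-\beta$ to $\mu$ pushes the relevant coordinates ``backwards''. Here the u-large hypothesis must enter. I would write everything in coordinates:
\begin{itemize}
\item A chamber $j$ for $Sp(p,q)$ corresponds to an interlacing (shuffle) of the first $p$ and last $q$ coordinates.
\item $\rho_n^{(j)}$ is then explicit.
\item $\{\cdot\}$ amounts to taking absolute values and sorting.
\item $w\beta$ is determined by where the coordinates $1$ and $p+1$ of $\lambda$ land after sorting.
\end{itemize}
Then $w\beta$ is negative, with $\langle \Lambda+\rho, w\beta\rangle\leq -1$, only if the $p+1$-st (or first) coordinate of $\lambda$ is quite negative relative to the others. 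Using the Salamanca-Riba--Vogan description of the u-small hull, namely $\langle\mu-2\rho_n^{(j)},\varpi_i^{(j)}\rangle\leq 0$ in a suitable chamber, I would show such configurations force either that $\mu$ is u-small, or that the chamber $j$ is not close to optimal.

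In the latter case I would directly compare $\|\{\mu-\rho_n^{(j)}\}+\rho\|$ with the value at a neighbouring chamber $j'$, obtained by swapping one adjacent pair of $p$- and $q$-coordinates. For $j'$ the shift $w'\beta$ is positive, and I expect this neighbouring-chamber estimate to suffice there.

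Handling these bad chambers cleanly is where I expect most of the case analysis. I would organize it by the position of coordinates $1$ and $p+1$ in the shuffle, following the pattern of \cite{D16} and Theorem~C of \cite{D20}.
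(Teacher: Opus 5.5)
There is a genuine gap, compounded by a misstatement of the definition.

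\textbf{The definition and the easy case.} The spin norm here is $\min_\ell\Vert\{\mu-\rho_n^{(\ell)}\}+\rho_c\Vert$. It uses $\rho_c$, and $\{\cdot\}$ is the $W(\frk,\frt)$-dominant conjugate: absolute values are sorted separately in the $Sp(p)$-block and in the $Sp(q)$-block, as in \eqref{eq6}--\eqref{eq7}. It does not use $\rho$ and the $W(\frg,\frt)$-dominant conjugate. Your first reduction survives this correction, since $\rho_c$ is $\frk$-dominant and so $\Vert\{x\}+\rho_c\Vert\ge\Vert x+\rho_c\Vert$. Your ``automatic'' case does not survive it. Now $w\in W(\frk,\frt)$, so $w\beta=\pm e_a\pm e_{p+b}$ is a noncompact root, and there is no positivity to exploit against $\Lambda+\rho_c$. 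The pairing is guaranteed to exceed $-1$ only when both signs are $+$. That means the leading coordinates $m_1,n_1$ of the two blocks of $\mu_\ell=\mu-\rho_n^{(\ell)}$ must both be $\ge 1$. This is the paper's easy observation (Lemma~\ref{lemma-dominat}). Since $k_1\le q$ and $r_1\le p$, it settles every chamber at once only when $a_1\ge q+1$ and $b_1\ge p+1$. Lemma~\ref{lemma-u-large} guarantees only one of these.

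\textbf{The missing core.} The content of the theorem lies in the chambers with $m_1\le 0$ or $n_1\le 0$, the paper's $\mu$-deficient $\rho_n^{(\ell)}$. There your proposal only records an expectation. Saying ``such configurations force $\mu$ u-small or $j$ not close to optimal'' restates what has to be proved.

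The one mechanism you do suggest fails in general: a single swap to a neighbouring chamber where the shift becomes favourable. For an $R$-weight ($a_1\le q$, $b_1\ge p+1$) one has $n_1\ge 1$, so deficiency forces $k_1\ge a_1$. An elementary move changes $k_1$ by at most one. Hence if $k_1\ge a_1+1$, every neighbour still has $m_1\le -1$. Example~\ref{example-deficient} is such a case: $\kappa=(2,0,0\mid 7,6,6,6,6)$ with $\rho_n^{(35)}=(4,0,0\mid 3,2,2,2,2)$.

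What the paper proves instead is that a deficient $\rho_n^{(\ell)}$ is never a minimizer. It builds a chain of elementary moves $\rho_n^{(\ell)}\to\rho_n^{(L_{\ell,h})}$, each changing $\mu_\ell$ by $e_h-e_{n-k_h+1}$, and each strictly lowering $\Vert\cdot\Vert_\frk$. This needs precise sign information at the two coordinates touched:
\begin{itemize}
\item $m_h\le 0$;
\item $n_{q-k_h+1}>0$, and indeed $n_{q-k_h+1}\ge Z+1$ with $Z=\card\{i\mid m_i=0\}$ when $m_h=0$.
\end{itemize}
This is exactly where u-largeness enters, in the explicit form of Proposition~\ref{prop-u-large}:
\begin{itemize}
\item Lemma~\ref{lemma-down} uses it to get $b_{q-a_1+1}\ge p$, which lets one descend to $k_1=a_1$.
\item Proposition~\ref{prop-boundary} builds $(a_1,\dots,a_p\mid c_1,\dots,c_q)\in\Omega_{p,q}$ from $\mu$. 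It combines Lemma~\ref{lemma-omegapq} with the u-large inequality to find $t_0$ with $b_{t_0}\ge p+c_{t_0}+1$, and $t_0$ dictates the choice of $h$.
\end{itemize}
None of these ingredients appears in your plan, nor do the preliminary reductions via Proposition~\ref{prop-2p+1}. As written, the proposal proves only the easy case.
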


The notion spin norm was introduced in \cite{D13}. It will be recalled in Section \ref{sec-pre}. Let us explain motivation of the above theorem. Now assume that $\pi$ is \emph{unitary}. An ongoing research topic is the \emph{Dirac cohomology} $H_D(\pi)$. See \cite{V97,HP} and references therein for necessary background on Dirac cohomology. To compute $H_D(\pi)$, Theorem \ref{thm-reduction-pencil-sppq} helps one to reduce $K$-types along any pencil $P(\mu)$ of $\pi$. Indeed, if $\mu$ is u-large, it suffices to consider $\mu$ itself; otherwise, it suffices to consider the finitely many u-small $K$-types among $P(\mu)$.

The analogue of  Theorem \ref{thm-reduction-pencil-sppq} has been obtained for complex Lie groups in \cite{D16} and for $SL(n, \bbR)$, $SL(n, \bbH)$ and all non-Hermitian symmetric exceptional simple Lie groups in Theorem C of \cite{D20}. Since 2018, the first named author has kept trying to attain Theorem \ref{thm-reduction-pencil-sppq} but in vain. We realized that it was more difficult in the $Sp(p, q)$ case since the unitarily small convex hull became much more complicated. From a conventional point of view, this is a problem of linear programming or integer partition, and it is related to the arrangement of Young diagrams. Therefore, computationally, it is easy to verify the theorem for relatively small values of $p$ and $q$. However, as $p$ and $q$ increase, the constraints increase rapidly to the extent that even computers cannot handle them. The spin norm is defined via a minimum. For our problem, one might generally consider how to attain this minimum. Nevertheless, in the discrete setting we are dealing with, attaining the minimum is extremely difficult. In this paper, we present a completely different approach that allows us to circumvent this difficulty, which may serve as a new technique for solving similar problems. We hope the current work could shed some light on dealing with the unitarily small convex hull.

The paper is organized as follows: Section \ref{sec-pre} recalls necessary preliminaries. Section \ref{sec-proof} proves Theorem \ref{thm-reduction-pencil-sppq}.

\section{Preliminaries}\label{sec-pre}
Let $G=Sp(p, q)$ and continue with the notation in the introduction.

\subsection{Root data on $\frsp(p,q)$}
Recall that a $\Delta^+(\frk, \frt)$ has been fixed in \eqref{k-posroots}. Denote  $\alpha_j=e_j-e_{j+1}(j\neq p,n)$, $\alpha_p=2e_p$, and $\alpha_{n}=2e_{n}$. They are the simple roots of $\Delta^+(\frk,\frt)$. The corresponding fundamental dominant weights are $\omega_j=\sum_{i=1}^j e_i$ for $1\le j\le p$ and $\omega_j=\sum_{i=p+1}^j e_i$ for $p+1\le j\le n$.

For $Sp(p,q)$, we choose the Vogan diagram as follows:
\begin{figure}[H]
\centering
\scalebox{1.1}{\includegraphics{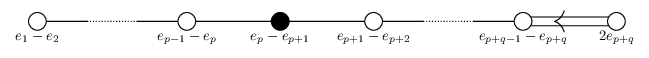}}
\caption{Vogan diagram for $Sp(p, q)$}
\end{figure}
In this case,
\begin{gather*}
	\Delta^+(\frp,\frt)=\{e_i\pm e_j\mid 1\le i\le p, \, p+1\le j\le n\}.
\end{gather*}
Let $\gamma_i=e_i-e_{i+1}(1\le i\le n-1)$ and $\gamma_{n}=2e_{n}$. They are the simple roots of $\Delta^+(\frg,\frt)$. The corresponding fundamental dominant weights are $\xi_j=\sum_{i=1}^je_i(1\le j\le n)$. We denote the half sum of the roots in $\Delta^+(\frg,\frt)$,\ $\Delta^+(\frk,\frt)$ and $\Delta^+(\frp,\frt)$ by $\rho$,\ $\rho_c$ and $\rho_n$, respectively. Clearly,
\begin{align*}
	\rho&=(n,n-1,\dots,q+1\mid q,q-1,\dots,1),\quad &&\rho_c=(p,p-1,\dots,1\mid q,q-1,\dots,1),\\
	\rho_n&=(q,q,\dots,q\mid 0,0,\dots,0),\quad &&\beta=(1,0,\dots,0\mid 1,0,\dots,0).
\end{align*}
\subsection{spin norm}
Notice that $\frk$ has no center, we have
\[\Delta^+(\frg,\frt)=\Delta^+(\frk,\frt)\cup\Delta^+(\frp,\frt).\]
Let $C$ is the dominant Weyl chamber corresponding to $\Delta^+(\frk,\frt)$. Denote by $C_\frg$ the closed Weyl chamber corresponding to $\Delta^+(\frg,\frt)$. Then $C_\frg$ is contained in $C$. Define
\[W(\frg,\frt)^1=\{w\in W(\frg,\frt)\mid w(C_\frg)\subset C\}.\]
As in \cite{D20},
\[\{w\Delta^+(\frg,\frt)\mid w\in W(\frg,\frt)^1\}\]
are exactly all the choices of positive roots systems for $\Delta(\frg,\frt)$ containing $\Delta^+(\frk,\frt)$. Let us enumerate the elements of $W(\frg,\frt)^1$ as $w^{(0)}=e,\dots,w^{(s-1)}$; the enumeration will specified in Section \ref{subsec-rhon}. For $0\le \ell\le s-1$, put
\[(\Delta^+)^{(\ell)}(\frp,\frt)=w^{(\ell)}\Delta^+(\frp,\frt),\quad (\Delta^+)^{(\ell)}(\frg,\frt)=\Delta^+(\frk,\frt)\cup w^{(\ell)}\Delta^+(\frp,\frt).\]
Denote by $\rho_n^{(\ell)}$ the half sum of the positive roots in $(\Delta^+)^{(\ell)}(\frp,\frt)$. Then we have
\[w^{(\ell)}\rho=\rho_c+\rho_n^{(\ell)},\quad 0\le \ell\le s-1.\]
Now as in \cite{D13}, the spin norm of a $\Delta^+(\frk,\frt)$-dominat weight $\nu$ is defined to be
\[\Vert \nu \Vert_{\mathrm{spin}}:=\min_{0\le\ell\le s-1}\Vert \{\nu-\rho_n^{(\ell)}\}+\rho_c\Vert.\]

The Weyl group $W(\frg,\frt)$ acts by permutations and sign changes, and that a weight is $\Delta^+(\frk,\frt)$-dominat if and only if both its first $p$ coordinates and its last $q$ coordinates form a weakly decreasing sequence. For $i<j$, let
$$
s_{i,j}=s_{e_i-e_{i+1}} s_{e_{i+1}-e_{i+2}}\cdots s_{e_{j-1}-e_{j}}.
$$
We make the convention that $s_{i,i}=e$. Then
\[W(\frg,\frt)^1=\left\{s_{p,i_p} s_{p-1,i_{p-1}}\cdots s_{2,i_2} s_{1,i_1}\mid 1\le i_1<i_2<\cdots<i_p\le n\right\},\]
In particular, $i_{k-1}+1\le i_k\le q+k$, where $i_0:=0$. Thus $W(\frg,\frt)^1$ has cardinality
$s:=\dbinom{n}{p}$.

\subsection{u-small $K$-types}

The \emph{unitarily small convex hull} was defined by Salamanca-Riba and Vogan in \cite{SV} as
$$
R(\Delta(\frp, \frt)):=\left\{\sum_{\alpha\in\Delta(\frp, \frt)}  b_{\alpha} \alpha\mid 0\leq b_{\alpha}\leq 1\right\}.
$$

The following result essentially comes from Theorem 6.7 of \cite{SV}.

\begin{lemma}\emph{(Lemma 4.4 of \cite{D20})}\label{lemma-Dong}
Let $\nu$ be any $\Delta^+(\frk, \frt)$-dominat weight. Then $\nu$ is u-small if and only if $\langle\nu+2\rho_c, w\xi_i\rangle \le 2\langle\rho, \xi_i\rangle$ for all $1\le i\le n$ and all $w\in W(\frg, \frt)^1$.
\end{lemma}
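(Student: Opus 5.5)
The plan is to describe $R(\Delta(\frp,\frt))$ by its supporting half-spaces. Then we use $W(\frk,\frt)$-symmetry and the decomposition of $C$ into $\frg$-chambers to cut the infinitely many half-space conditions down to the finitely many inequalities in the statement. Since $\Delta(\frp,\frt)=\Delta^+(\frp,\frt)\cup(-\Delta^+(\frp,\frt))$, summing the segments $[0,\alpha]$ and $[0,-\alpha]$ gives
$$
R(\Delta(\frp,\frt))=\sum_{\alpha\in\Delta^+(\frp,\frt)}[-\alpha,\alpha].
$$
This is a compact convex zonotope with support function
$$
h(\lambda)=\max_{x\in R}\langle x,\lambda\rangle=\sum_{\alpha\in\Delta^+(\frp,\frt)}|\langle\lambda,\alpha\rangle|.
$$
Hence $\nu$ is u-small if and only if $\langle\nu,\lambda\rangle\le h(\lambda)$ for every $\lambda$.

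\textbf{Step 1 (reduction to $\lambda\in C$).} Because $\frp$ is a $K$-module, $\Delta(\frp,\frt)$ is $W(\frk,\frt)$-stable, so $h$ is $W(\frk,\frt)$-invariant. For $\lambda$ arbitrary, let $\lambda^+\in C$ be its $W(\frk,\frt)$-conjugate. Since $\nu$ is $\Delta^+(\frk,\frt)$-dominant, the standard fact $\lambda^+-\lambda\in\sum_{\alpha\in\Delta^+(\frk,\frt)}\bbZ_{\ge0}\,\alpha\otimes\bbR$ gives $\langle\nu,\lambda\rangle\le\langle\nu,\lambda^+\rangle$. Therefore it suffices to test $\lambda\in C$.

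\textbf{Step 2 (computing $h$ on each piece of $C$).} The cone $C$ is the union of the cones $wC_\frg$ for $w\in W(\frg,\frt)^1$. Fix such a $w$ and take $\lambda=w\lambda'$ with $\lambda'\in C_\frg$. The set $w\Delta^+(\frp,\frt)$ contains exactly one root from each pair $\pm\alpha$ with $\alpha\in\Delta(\frp,\frt)$. Moreover every root of it pairs nonnegatively with $\lambda$, since it lies in the positive system $w\Delta^+(\frg,\frt)$ and $\lambda\in wC_\frg$. Hence
$$
h(\lambda)=\sum_{\gamma\in w\Delta^+(\frp,\frt)}\langle\lambda,\gamma\rangle=2\langle\lambda,w\rho_n\rangle=2\langle\lambda,w\rho-\rho_c\rangle,
$$
where the last equality uses $w\rho=\rho_c+w\rho_n$, which holds because $w$ fixes $\Delta^+(\frk,\frt)$ setwise. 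Since $\langle w\lambda',w\rho\rangle=\langle\lambda',\rho\rangle$, the condition $\langle\nu,\lambda\rangle\le h(\lambda)$ becomes
$$
\langle\nu+2\rho_c,w\lambda'\rangle\le 2\langle\lambda',\rho\rangle .
$$

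\textbf{Step 3 (reduction to generators).} For fixed $w$, both sides of the last inequality are linear in $\lambda'$. The cone $C_\frg$ is generated over $\bbR_{\ge0}$ by $\xi_1,\dots,\xi_n$. So the inequality holds on all of $C_\frg$ if and only if it holds for $\lambda'=\xi_i$, $1\le i\le n$. This yields exactly $\langle\nu+2\rho_c,w\xi_i\rangle\le 2\langle\rho,\xi_i\rangle$ for all $i$ and all $w\in W(\frg,\frt)^1$. Both directions follow, since each step is an equivalence.

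The only delicate point is Step 2. There one must check that $w\Delta^+(\frp,\frt)$ is a "sign choice" of $\Delta(\frp,\frt)$ that is nonnegative on $wC_\frg$, and that $C=\bigcup_{w\in W(\frg,\frt)^1}wC_\frg$. Both follow from the description of $W(\frg,\frt)^1$ as parametrizing the positive systems containing $\Delta^+(\frk,\frt)$, recalled above from \cite{D20}.
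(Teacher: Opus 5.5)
Your strategy is correct, and it takes a genuinely different route from the paper, but Step 2 is justified by false claims that need replacing. The paper gives no argument of its own: it quotes Lemma 4.4 of \cite{D20}, which repackages Theorem 6.7 of \cite{SV}. You instead prove the statement from scratch by convex geometry, in four moves:
write $R(\Delta(\frp,\frt))=\sum_{\alpha\in\Delta^+(\frp,\frt)}[-\alpha,\alpha]$ with support function $h(\lambda)=\sum_{\alpha\in\Delta^+(\frp,\frt)}|\langle\lambda,\alpha\rangle|$; use $W(\frk,\frt)$-invariance of $h$ and dominance of $\nu$ to restrict to $\lambda\in C$; tile $C=\bigcup_{w\in W(\frg,\frt)^1}wC_\frg$; and use linearity of $h$ on each tile so that only the rays $w\xi_i$ matter. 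This gives a self-contained proof and shows that dominance of $\nu$ is needed only for the ``if'' direction. A small correction in Step 1: the cone is $\sum_{\alpha\in\Delta^+(\frk,\frt)}\bbR_{\ge0}\alpha$.

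In Step 2, an element $w\in W(\frg,\frt)^1$ does \emph{not} in general fix $\Delta^+(\frk,\frt)$. Likewise, $w\Delta^+(\frp,\frt)$ need not consist of noncompact roots.

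Take $Sp(1,2)$ and $w=s_{e_1-e_2}\in W(\frg,\frt)^1$. Then $w\Delta^+(\frp,\frt)=\{e_2\pm e_1,\,e_2\pm e_3\}$ contains the compact roots $e_2\pm e_3$. Also $w\rho_n=(0\mid 2,0)$, whereas $w\rho-\rho_c=(1\mid 1,0)$. For $\lambda=w\xi_1=e_2$ you get $h(\lambda)=2$, but $\sum_{\gamma\in w\Delta^+(\frp,\frt)}\langle\lambda,\gamma\rangle=2\langle\lambda,w\rho_n\rangle=4$. So the two middle equalities in your chain are wrong. Only the endpoint formula $h(\lambda)=2\langle\lambda,w\rho-\rho_c\rangle$ holds.

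The fix is to set $\Phi_w:=w\Delta^+(\frg,\frt)\cap\Delta(\frp,\frt)$.
\begin{itemize}
\item Since $wC_\frg\subset C$, interior points of $wC_\frg$ are regular and positive on $\Delta^+(\frk,\frt)$. So the positive system $w\Delta^+(\frg,\frt)$ contains $\Delta^+(\frk,\frt)$.
\item Hence $w\Delta^+(\frg,\frt)=\Delta^+(\frk,\frt)\sqcup\Phi_w$, and therefore $w\rho=\rho_c+\frac12\sum_{\gamma\in\Phi_w}\gamma$.
\item $\Phi_w$ contains exactly one root from each pair $\pm\alpha$ with $\alpha\in\Delta(\frp,\frt)$, and it is nonnegative on $wC_\frg$.
\item So $h(\lambda)=\sum_{\gamma\in\Phi_w}\langle\lambda,\gamma\rangle=2\langle\lambda,w\rho-\rho_c\rangle$ for $\lambda\in wC_\frg$.
\end{itemize}
The rest of your argument then goes through unchanged.

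The paper's display $(\Delta^+)^{(\ell)}(\frp,\frt)=w^{(\ell)}\Delta^+(\frp,\frt)$, which you relied on, has the same slip. It must be read with $\Phi_{w^{(\ell)}}$ in place of $w^{(\ell)}\Delta^+(\frp,\frt)$; only then is $w^{(\ell)}\rho=\rho_c+\rho_n^{(\ell)}$ true.
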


From now on, we always use
\begin{equation}\label{mu}
\mu=(a_1,\dots,a_p\mid b_1,\dots,b_q)
 \end{equation}
to denote a $\Delta^+(\frk,\frt)$-dominant weight consisting of non-negative integers.
By the above lemma, we obtain a more explicit criterion for $\mu$ being u-large.

\begin{prop}\label{prop-u-large}
	A weight $\mu$ in the form of \eqref{mu} is u-large if and only if there exist $0\le f\le p$ and $0\le g\le q$ such that
\begin{equation}\label{eq4}
\sum_{i=1}^f a_i+\sum_{j=1}^g b_j>2pq-2(p-f)(q-g).
\end{equation}
\end{prop}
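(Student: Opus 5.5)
The plan is to apply Lemma \ref{lemma-Dong} and compute $\langle \mu+2\rho_c, w\xi_i\rangle$ and $2\langle\rho,\xi_i\rangle$ explicitly. First, $2\langle\rho,\xi_i\rangle$ is twice the sum of the first $i$ coordinates of $\rho=(n,\dots,q+1\mid q,\dots,1)$. Next, for $w=s_{p,i_p}\cdots s_{1,i_1}\in W(\frg,\frt)^1$, the vector $w\xi_i$ is a $0/1$ vector with exactly $i$ ones; no sign changes occur, since the elements $s_{k,i_k}$ are products of simple reflections $s_{e_j-e_{j+1}}$. The effect of $w$ is to move the first $p$ coordinates into positions $i_1<\dots<i_p$ among $\{1,\dots,n\}$, keeping their relative order. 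Hence $w\xi_i$ has some number $f$ of ones on the first block and $g=i-f$ on the second. Because the positions keep their order, these ones sit on the first $f$ coordinates of the $a$-block and the first $g$ coordinates of the $b$-block. Conversely, every pair $(f,g)$ with $f+g=i$, $0\le f\le p$, $0\le g\le q$ arises for a suitable choice of $i_1<\dots<i_p$ and $i=f+g$. Here $f+g=i$ ranges over $1,\dots,n$, and the pair $(0,0)$ gives the trivial inequality.

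With this description, the inequality in Lemma \ref{lemma-Dong} for the pair $(f,g)$ reads
\[
\sum_{k=1}^f a_k+\sum_{k=1}^g b_k+2\sum_{k=1}^f(p+1-k)+2\sum_{k=1}^g(q+1-k)\le 2\sum_{k=1}^{f+g}\rho_k,
\]
where $\rho_k$ is the $k$-th coordinate of $\rho$. Note that this uses only $i=f+g$, since $\xi_i$ depends on $i$ alone. The remaining task is to verify the identity
\[
2\sum_{k=1}^{f+g}\rho_k-f(2p+1-f)-g(2q+1-g)=2pq-2(p-f)(q-g)
\]
for all $0\le f\le p$, $0\le g\le q$. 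I would split into two cases.

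\emph{Case $f+g\le p$.} Here $\sum_{k\le f+g}\rho_k=\sum_{k=1}^{f+g}(n+1-k)$, and the identity reduces to the expansion $(f+g)(2n+1-f-g)-f(2p+1-f)-g(2q+1-g)=2fq+2gp-2fg$, which matches the right-hand side.

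\emph{Case $f+g>p$.} The sum now includes $p$ terms from the first part of $\rho$ and $f+g-p$ terms $q,q-1,\dots$ from the second part. A similar quadratic expansion should again give $2fq+2gp-2fg$; this is the step most likely to need care.

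Once the identity holds, negating the criterion of Lemma \ref{lemma-Dong} gives exactly \eqref{eq4}: $\mu$ is u-large if and only if the inequality fails for some pair $(f,g)$.

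The main obstacle is the combinatorial claim that the set $\{w\xi_i\}$ realizes precisely the $0/1$ patterns consisting of an initial segment in each block, together with checking that the second case of the identity genuinely holds. If that case produced a discrepancy, I would instead re-index $w$ via the positions $i_k$ and recompute. The Weyl group description suggests the claim is right, since $w$ merges the two decreasing blocks order-preservingly.
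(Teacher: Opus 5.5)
Your proposal is correct and is essentially the paper's argument: apply Lemma \ref{lemma-Dong}, observe that $w\xi_i$ is a $0/1$ vector whose ones are exactly the first $f$ coordinates of the $a$-block and the first $g=i-f$ coordinates of the $b$-block, and then do the same quadratic computation; your factor $2$ on the $\rho_c$ terms is the right one. Two small corrections: your description of $w$ is the inverse of the true one, since $s_{1,i_1}$ moves the $i_1$-th coordinate to the front, so $w$ gathers the entries in positions $i_1<\cdots<i_p$ into the first block (this is the direction your ``initial segments'' conclusion needs, and it is also immediate from $w\xi_i\in C$, because $\xi_i\in C_\frg$ and $w\in W(\frg,\frt)^1$); and your Case $f+g>p$ is not a separate case, since $\rho=(n,n-1,\dots,1)$ gives $\rho_k=n+1-k$ for every $k\le n$, so your Case~1 expansion $(f+g)(2n+1-f-g)-f(2p+1-f)-g(2q+1-g)=2fq+2gp-2fg=2pq-2(p-f)(q-g)$ already covers all $0\le f\le p$, $0\le g\le q$.
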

\begin{proof}
	By Lemma \ref{lemma-Dong}, $\mu$ is u-small if and only if $\langle\mu + 2\rho_c, w\xi_i\rangle \le 2\langle\rho,\xi_i\rangle$ for all $w\in W(\frg, \frt)^1$ and all $1\leq i\leq n$. Taking into account the action of $w$ on weights, the latter condition is equivalent to
	\[\sum_{i=1}^f \big(a_i+(p+1-i)\big)+\sum_{j=1}^g \big(b_j+(q+1-j)\big)\le 2\sum_{t=1}^{f+g}(n+1-t),\]
	where $0\le f\le p$ and $0\le g\le q$. The above inequality is further equivalent to
	\begin{align*}
		\sum_{i=1}^f a_i+\sum_{j=1}^g b_j&\le  2\left(\sum_{t=1}^{f+g}(n+1-t)-\sum_{i=1}^f (p+1-i)-\sum_{j=1}^g (q+1-j)\right)\\
		&=2\left(\sum_{i=1}^f q+\sum_{j=1}^g (p-f)\right)\\
        &=2(fq+gp)-2fg\\
        &=2pq-2(p-f)(q-g).
	\end{align*}
	The lemma follows.
\end{proof}

Let $\mu$ be as in \eqref{mu}.
For $0\le f\le p$ and $0\le g\le q$, put
\begin{equation}\label{mufg}
\mu_{f, g}=(a_1,\dots,a_f, q-g, \dots, q-g \mid b_1,\dots,b_g, p-f, \dots, p-f).
\end{equation}
Let $S(v)$ denote the sum of the coordinates of a real vector $v$. Then

\[S(\mu_{f, g})=\sum_{i=1}^f a_i+\sum_{j=1}^g b_j+2(p-f)(q-g).\]

This shows that $\mu$ is u-small if and only if $S(\mu_{f, g})\le 2pq$ for all $0\le f\le p$ and all $0\le g\le q$. In particular, if $a_1\le q$ and $b_1\le p$, then $\mu$ is u-small. This proves the following lemma.

\begin{lemma}\label{lemma-u-large}
	Assume that $\mu$ is u-large. Then $a_1\ge q+1$ or $b_1\ge p+1$.
\end{lemma}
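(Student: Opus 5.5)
We argue by contraposition: assuming $a_1\le q$ and $b_1\le p$, we show that $\mu$ is u-small. We use the reformulation of Proposition \ref{prop-u-large} given just before the statement: $\mu$ is u-small if and only if $S(\mu_{f,g})\le 2pq$ for all $0\le f\le p$ and $0\le g\le q$. So it suffices to bound $S(\mu_{f,g})$ for every such pair $(f,g)$.

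Since $\mu$ is $\Delta^+(\frk,\frt)$-dominant, its first $p$ coordinates are weakly decreasing, and so are its last $q$ coordinates. Hence $a_i\le a_1\le q$ for all $i$ and $b_j\le b_1\le p$ for all $j$. Therefore
\[
S(\mu_{f,g})=\sum_{i=1}^f a_i+\sum_{j=1}^g b_j+2(p-f)(q-g)\le fq+gp+2(p-f)(q-g).
\]

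It remains to compare the right-hand side with $2pq$. Expanding,
\[
2pq-\big(fq+gp+2(p-f)(q-g)\big)=fq+gp-2fg=f(q-g)+g(p-f).
\]
This is nonnegative because $0\le f\le p$ and $0\le g\le q$. Hence $S(\mu_{f,g})\le 2pq$ for all admissible $f,g$, so $\mu$ is u-small, which proves the contrapositive.

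There is no real obstacle here. The only step that requires care is the algebraic identity above, together with the sign check that uses $f\le p$ and $g\le q$.
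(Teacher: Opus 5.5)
Your proof is correct and is essentially the paper's own argument, which likewise deduces the lemma from the criterion that $\mu$ is u-small if and only if $S(\mu_{f,g})\le 2pq$ for all $0\le f\le p$, $0\le g\le q$, by noting that $a_1\le q$ and $b_1\le p$ force this bound. Your algebraic identity can be bypassed: under these hypotheses every coordinate of $\mu_{f,g}$ is at most the corresponding coordinate of $(q,\dots,q\mid p,\dots,p)$, whose coordinate sum is $2pq$.
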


For convenience, for $0\le f\le p$ and $0\le g\le q$, we denote by $\Lambda_{f, g}$ the set of all weights $\mu$ in the form of \eqref{mu} satisfying
\[\sum_{i=1}^f a_i+\sum_{j=1}^g b_j>2pq-2(p-f)(q-g).\]
Namely,
\begin{equation}\label{sum-mufg}
S(\mu_{f, g})> 2pq.
\end{equation}

\subsection{Domination}
For real vectors $u=(u_1,\dots, u_n)$ and $v=(v_1, \dots, v_n)$, we say that $u$ \emph{dominates} $v$ if $u_i\ge v_i$ for all $1\le i\le n$ and  strict inequality happens for at least one $i$. In this case, we write $u\gg v$. We write $\{u\}$ for the vector obtained by rearranging the absolute values of the coordinates of $u$ in decreasing order. Let $e_i$ denote the $n$-dimensional vector whose entries are all zero except that the $i$-th entry equals $1$.

\begin{lemma}\label{lemma-dominat}
Let $x=(x_1,\dots,x_n)$ be consisting of integer entries.   Then, if $x_s \geq 0$ for some $1\le s\le n$, one has $\{x+e_s\}\gg\{x\}$. If $x_s\leq 0$, then $\{x-e_s\}\gg\{x\}$.
\end{lemma}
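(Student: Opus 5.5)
The plan is to reduce the statement to a single observation about sorted multisets, and to handle the second assertion by symmetry.

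\textbf{Step 1: the absolute values change in only one place.} Suppose first that $x_s\ge 0$. Then $|x_s+1|=x_s+1=|x_s|+1$. All other coordinates of $x+e_s$ agree with those of $x$. So the multiset of absolute values of the coordinates of $x+e_s$ is obtained from that of $x$ by replacing one copy of the integer $a:=|x_s|$ with $a+1$.

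\textbf{Step 2: a concrete description of $\{x+e_s\}$.} Write $\{x\}=(c_1,\dots,c_n)$, so that $c_1\ge\cdots\ge c_n\ge 0$ are integers. Let $k$ be the smallest index with $c_k=a$; such an index exists because $a$ occurs among the $c_i$. I claim that
\[
\{x+e_s\}=(c_1,\dots,c_{k-1},\,c_k+1,\,c_{k+1},\dots,c_n).
\]
Since this vector has the right multiset of entries, it suffices to check that it is weakly decreasing.
\begin{itemize}
\item For $i>k$ we have $c_k+1>c_k\ge c_i$, so the raised entry is at least everything after it.
\item If $k>1$, minimality of $k$ gives $c_{k-1}>a$.
\end{itemize}
The second point is the only place where the integrality hypothesis is used, and it is the only real content of the proof. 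Since the entries are integers, $c_{k-1}>a$ forces $c_{k-1}\ge a+1=c_k+1$. This confirms the displayed formula for $\{x+e_s\}$.

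\textbf{Step 3: domination.} By the formula in Step 2, $\{x+e_s\}$ agrees with $\{x\}$ in every coordinate except the $k$-th, where it is larger by $1$. Hence $\{x+e_s\}\gg\{x\}$.

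\textbf{Step 4: the case $x_s\le 0$.} Apply Steps 1--3 to the vector $-x$, whose $s$-th coordinate $-x_s$ is $\ge 0$. This gives $\{-x+e_s\}\gg\{-x\}$. Since $\{\cdot\}$ depends only on absolute values, $\{-y\}=\{y\}$ for every $y$. Therefore
\[
\{x-e_s\}=\{-x+e_s\}\gg\{-x\}=\{x\},
\]
which is the second assertion.

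I do not expect any serious obstacle. The one subtle point is choosing the leftmost occurrence of $a$ in Step 2, together with integrality, so that the raised entry stays in sorted position. Without integrality the claim can fail: for example, $x=(0.5,1)$ with $s=1$ gives $\{x\}=(1,0.5)$ but $\{x+e_1\}=(1.5,1)$.
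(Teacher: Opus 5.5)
Your proof is correct and is essentially the paper's argument: the paper separates the absolute values strictly larger than $|x_s|$ from the rest, which places $|x_s|+1$ exactly at your index $k$, uses integrality at the same point, and leaves the case $x_s\le 0$ to ``similarly'', which your $x\mapsto -x$ reduction makes explicit. One caveat on your closing remark: integrality is needed only for your explicit one-coordinate formula in Step 2, not for the lemma itself, which holds for real vectors because each order statistic of $(|x_1|,\dots,|x_n|)$ is nondecreasing in every entry and the total increases by $1$ (your own example gives $(1.5,1)\gg(1,0.5)$).
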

\begin{proof}
	For the first assertion, consider the multiset
	\[K=\{x_j\mid |x_j|>|x_s|\},\quad T=\{x_j\mid |x_j|\le |x_s|,j\neq s\}.\]
	Arrange the elements of $K$ and $T$ in decreasing order and write them as $k_1,\dots,k_l$ and $t_1,\dots,t_r$,  respectively. Then
	\[\{x+e_t\}=(k_1,\dots,k_l,a_s+1,t_1,\dots,t_r),\quad \{x\}=(k_1,\dots,k_l,a_s,t_1,\dots,t_r).\]
The first assertion follows. The second assertion can be proven similarly.
\end{proof}
Let $\nu$ denote an $n$-dimensional integer vector, and define the \emph{$\frk$-value} of $\nu$ as follows
\[\Vert \nu\Vert_{\frk}:=\Vert\{\nu\}+\rho_c\Vert.\]
Clearly, for any $\nu_1, \nu_2\in\bbZ^n$ such that $\{v_1\}\gg\{v_2\}$, one has
\[\Vert \nu_1\Vert_\frk>\Vert \nu_2\Vert_\frk.\]

\subsection{The structure of $\rho_n^{(\ell)}$}\label{subsec-rhon}
 Recall for any $0\leq \ell\leq s-1$, we have
 $$
 \rho_{n}^{(\ell)}:=w^{(\ell)}\rho-\rho_c.
 $$
Denote by $\Omega_{p,q}=\{\rho_n^{(\ell)}\mid 0\leq \ell\leq s-1\}$. This set consists of $(x_1, \dots, x_p \mid y_1, \dots, y_q)\in\bbZ^n$ satisfying the following conditions:	
\begin{itemize}
	\item[$\bullet$] $q\ge x_1\ge \cdots\ge x_p\ge 0$,\ $p\ge y_1\ge \cdots\ge y_q\ge 0$;
	\item[$\bullet$] $\sum_{i=1}^px_i+\sum_{j=1}^qy_j=pq$;
	\item[$\bullet$] $y_j=\card\{i\mid q-x_i\ge j\}=\max\{i\mid q-x_i\ge j\}$ for any $1\leq j\leq q$.
\end{itemize}

\begin{lemma}\label{lemma-omegapq}
	Take any $(x_1, \dots, x_p \mid y_1, \dots, y_q)\in\Omega_{p, q}$. Then
	\[\sum_{i=1}^f x_i+\sum_{j=1}^g y_i\le fq+(p-f)g\]
for all $1\le f\le p$, $1\le g\le q$.
\end{lemma}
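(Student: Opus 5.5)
We prove the inequality by counting pairs, using only the three defining conditions of $\Omega_{p,q}$. Fix $(x_1,\dots,x_p\mid y_1,\dots,y_q)\in\Omega_{p,q}$ and think of the $p\times q$ grid of cells $(i,j)$ with $1\le i\le p$, $1\le j\le q$.

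\emph{Step 1: a partition of the grid.} Put
\[A=\{(i,j)\mid j>q-x_i\},\qquad B=\{(i,j)\mid j\le q-x_i\}.\]
Row $i$ contains exactly $x_i$ cells of $A$, so $\card(A\cap\{i\le f\})=\sum_{i=1}^f x_i$. The third defining condition says $y_j=\card\{i\mid q-x_i\ge j\}$, so column $j$ contains exactly $y_j$ cells of $B$. Hence $\card(B\cap\{j\le g\})=\sum_{j=1}^g y_j$. (This already reproves $\sum x_i+\sum y_j=pq$.)

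\emph{Step 2: bound the two counts.} The set $A\cap\{i\le f\}$ lies in the first $f$ rows, and $B\cap\{j\le g\}$ lies in the first $g$ columns. The two sets are disjoint because $A\cap B=\emptyset$. So their union lies in
\[\{i\le f\}\cup\{j\le g\},\]
whose cardinality is $fq+gp-fg=fq+(p-f)g$. Adding the two counts from Step 1 gives
\[\sum_{i=1}^f x_i+\sum_{j=1}^g y_j\le fq+(p-f)g,\]
which is the claim.

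\emph{Step 3: check.} There is no real obstacle here. The only point to verify carefully is that the third condition indeed counts the cells of $B$ column by column. It does: $(i,j)\in B$ if and only if $q-x_i\ge j$, which is exactly the condition counted in $y_j$. No monotonicity of the $x_i$ or $y_j$ is needed.

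An alternative route would be to write $\rho_n^{(\ell)}=w^{(\ell)}\rho-\rho_c$ and use $\langle w\rho,\xi\rangle\le\langle\rho,\xi\rangle$ for a dominant $\xi$. That would amount to the computation in Proposition \ref{prop-u-large}. The combinatorial argument above is shorter, so I would use it.
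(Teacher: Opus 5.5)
Your proof is correct and is essentially the paper's argument: the paper writes $\sum_{j=1}^g y_j=\sum_{i=1}^p\min(q-x_i,g)$, which is your count of $B\cap\{j\le g\}$ done row by row, and then bounds each row $i\le f$ by $x_i+\min(q-x_i,g)\le q$ and each row $i>f$ by $\min(q-x_i,g)\le g$. Reading your containment in $\{i\le f\}\cup\{j\le g\}$ row by row gives exactly these two bounds, so your grid picture is a clean visualization of the same double count.
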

\begin{proof}
	Recall that $y_j=\card\{i\mid q-x_i\ge j\}$ for $1\leq j\leq q$. Thus
	\[\sum_{j=1}^g y_j=\sum_{i=1}^p\min(q-x_i, g)\]
	since each  $1\leq i\leq p$ contributes $1$ to $y_1,\dots,y_{\min (q-x_i, g)}$. This shows that
	\begin{align*}
    \sum_{i=1}^f x_i+\sum_{j=1}^g y_j&=\sum_{i=1}^f x_i+\sum_{i=1}^p\min(q-x_i, g)\\
      &=\sum_{i=1}^f \big(x_i+\min(q-x_i, g)\big)+\sum_{i=f+1}^p\min(q-x_i, g)\\
      &\le fq+(p-f)g.
      \end{align*}
\end{proof}

We can arrange the members of $\Omega_{p, q}$ in the \emph{lexicographic order}. Namely, $\rho_n^{(j)}$  is ahead of $\rho_n^{(k)}$ if the first nonzero coordinate of $\rho_n^{(j)}-\rho_n^{(k)}$ is positive. In particular,
\[\rho_n^{(0)}=(\underbrace{q,\dots,q}_{p}\mid 0,\dots,0),\quad \rho_n^{(s-1)}=(0,\dots,0\mid \underbrace{p,\dots,p}_{q}).\]
Correspondingly, we enumerate the elements of $W(\frg,\frt)^1$ as $w^{(0)}, w^{(1)}, \dots, w^{(s-1)}$, as previously agreed.
To simplify notation, we always denote $\mu-\rho_n^{(\ell)}$ by $\mu_\ell$.

Unless specified otherwise, let $K_\ell=(k_1, \dots, k_p)$ denote the first $p$ coordinates of $\rho_n^{(\ell)}$. By convention, $k_0=q$ and $k_{p+1}=0$. Note that the last $q$ coordinates of $\rho_n^{(\ell)}$ are completely determined by its first $p$ coordinates. Indeed, for $1\le j\le p+1$, put
$$
R_\ell(j):=(\underbrace{p-j+1,\dots,p-j+1}_{k_{j-1}-k_j})
$$
if $k_{j-1}-k_j\neq 0$ and put $R_\ell(j)$ as the empty array otherwise.

If we denote the last $q$ coordinates by $R_\ell$, then
\[R_\ell=\big(R_\ell(1),\cdots,R_\ell(p+1)\big).\]

\begin{example}\label{example-R}
	Let $p=4$ and $q=6$. If the first $p$ coordinates of $\rho_n^{(\ell)}$ are $(5,2,2,1)$, then $R_\ell(3)$ is an empty array. Therefore,
		\[R_\ell=\big(R_\ell(1)\mid R_\ell(2)\mid R_\ell(4)\mid R_\ell(5)\big)=(4\mid3,3,3\mid1\mid0).\]\hfill\qed
\end{example}

For any $0\leq \ell \leq s-2$, there must exist $1\le j\le p$ such that $k_{j}>k_{j+1}$. By the previous argument, the following vector is a member of $\Omega_{p, q}$:
$$
(K_\ell-e_{j}\mid R_\ell(1), \dots, R_\ell(j), \underbrace{p-j+\textbf{1}, p-j, \dots, p-j}_{k_{j}-k_{j+1}}, R_\ell(j+2), \dots, R_\ell(p+1))
$$
It  depends on both $\ell$ and $j$. Let us denote it by $\rho_n^{(L_{\ell,j})}$.
Thus
\begin{equation}\label{eq5}
\rho_n^{(\ell)}-\rho_n^{(L_{\ell, j})}=(\underbrace{0,\dots,0,1}_{j},0,\dots,0\mid 0,\dots,0,\underbrace{-1,0,\dots,0}_{k_{j}}).\end{equation}

\section{Proof of Theorem \ref{thm-reduction-pencil-sppq}}\label{sec-proof}
Throughout this section, we continue to use the previous notation.
In particular, $\mu$ is of the form \eqref{mu}, $\mu$ is u-large and $\mu-\beta$ is dominant for $\Delta^+(\frk, \frt)$.

\subsection{Technical preparations}
We say that $\rho_n^{(\ell)}$ is \emph{$\mu$-deficient} if $\Vert\mu_\ell\Vert_\frk\le \Vert\mu_\ell-\beta\Vert_\frk$.

\begin{example}\label{example-deficient} Let $p=3$ and $q=5$. Consider two weights
	\[\nu=(6,5,5 \mid 7,6,6,6,6)\quad\mbox{and}\quad \kappa=(2,0,0 \mid 7,6,6,6,6).\]They are u-large beacuse $S(\nu)$ and $S(\kappa)$ greater than $2pq$. Clearly, no $\nu$-deficient $\rho_n^{(\ell)}$ can occur. For
	\[\rho_n^{(35)}=(4,0,0 \mid 3,2,2,2,2),\]
	we compute
\[\Vert\kappa_{35}\Vert_\frk^2=285<287=\Vert\kappa_{35}-\beta\Vert_\frk^2.\]
Thus $\rho_n^{(35)}$ is $\kappa$-deficient but not $\nu$-deficient.\hfill\qed
\end{example}

Assume that there exists a $\mu$-deficient $\rho_n^{(\ell)}=(k_1,\dots,k_p\mid r_1,\dots,r_q)$, and let \[\mu_\ell=(m_1, \dots, m_p \mid n_1, \dots, n_q).\]
Put
\begin{align*}
	M_\ell&=\card\{i\neq 1\mid |m_i|\ge|m_1|\},\quad &&N_\ell=\card\{i\neq 1\mid |n_i|\ge|n_1|\}, \\
	M_\ell^+&=\card\{i\mid |m_i|>|m_1|\},\quad &&N_\ell^+=\card\{i\mid |n_i|>|n_1|\}.
\end{align*}
Since $\rho_n^{(\ell)}$ is assumed to be $\mu$-deficient, it follows form Lemma 2.4 that $m_1\ge 1$ and $n_1\ge 1$ cannot occur simultaneously. Therefore, $m_1\leq 0$ or $n_1\leq 0$.  Thus we must have $k_1\ge a_1$ or $r_1\geq b_1$.

Now there are  three possible cases:
\begin{itemize}
\item[$\bullet$] $m_1\ge 1$ and $n_1\le 0$. In this case,
\begin{equation}\label{eq6}
\Vert\mu_\ell-\beta\Vert_\frk^2-\Vert\mu_\ell\Vert_\frk^2=2\big((|n_1|-|m_1|)+(q-p)+(M_\ell-N_\ell^+)+1\big)\ge 0.
\end{equation}

\item[$\bullet$] $m_1\le 0$ and $n_1\ge 1$. In this case,
\begin{equation}\label{eq7}
\Vert\mu_\ell-\beta\Vert_\frk^2-\Vert\mu_\ell\Vert_\frk^2=2\big((|m_1|-|n_1|)+(p-q)+(N_\ell-M_\ell^+)+1\big)\ge 0.
\end{equation}

\item[$\bullet$] $m_1\le 0$ and $n_1\le 0$.  Then $a_1=m_1+k_1\leq k_1\leq q$ and $b_1=n_1+r_1\leq r_1\leq p$. This contradicts with Lemma \ref{lemma-u-large} since $\mu$ is assumed to be u-large.
\end{itemize}

To finish this subsection, we make some preparations for Proposition \ref{prop-2p+1}. Observe that
\[r_1=\left\{
\begin{array}{lc}
	p, & \mbox{if } k_1<q, \\
	p-j+1, &\mbox{if } k_{j-1}=q \mbox{ and } k_j<q.
\end{array}\right.\]
Hence $q\le k_1+r_1\le p+q-1$.

Since it is assumed that $\mu-\beta$ is $\Delta^+(\frk,\frt)$-dominant, then
\[a_1-1\ge a_2\ge\cdots\ge a_p\quad \mbox{and}\quad b_1-1\ge b_2\ge \cdots\ge b_q.\]
Notice that $k_0=\dots=k_{p-r_1}=q$ and $r_0=\dots=r_{q-k_1}=p$, we find that if $p-r_1>1$, we have $k_1=q$ and
\[m_1>m_2\ge\cdots\ge m_{p-r_1}.\]
If $q-k_1>1$, we have $r_1=p$ and
\[n_1>n_2\ge\cdots\ge n_{q-k_1}.\]
If $q-k_1\le 1$ or $p-r_1\le 1$, then $k_1\ge q-1$ or $r_1\ge p-1$.

 We consider two cases.
\begin{itemize}
	\item[$\bullet$] $b_1\ge 2p$. In this case,
	\begin{equation}\label{ineq1}
    |n_1|>|n_2|\ge\cdots\ge |n_{q-k_1}|\quad\mbox{or}\quad k_1\ge q-1.
    \end{equation}
	Thus we always have $N_\ell\le q-(q-k_1)=k_1$.
	\item[$\bullet$] $a_1\ge 2q$. The same argument shows that $M_\ell\le r_1$.
\end{itemize}

\subsection{The basic case}
If $a_1\ge q+1$ and $b_1\ge p+1$, then $\{\mu_\ell\}\gg\{\mu_\ell-\beta\}$ for all $0\leq \ell\leq s-1$. Therefore,
\[\Vert\mu\Vert_{\mathrm{spin}}=\Vert\mu_{i_0}\Vert_\frk>\Vert \mu_{i_0}-\beta\Vert_\frk\ge \Vert\mu-\beta\Vert_{\mathrm{spin}}.\]
Here $0\leq i_0\leq s-1$ is an index where $\mu$ attains its spin norm.
We say $\mu$ is an \emph{$R$-weight} if $a_1\le q$ and $b_1\ge p+1$; we say $\mu$ is an \emph{$L$-weight} $a_1\ge q+1$ and $b_1\le p$. Now it suffices to consider that $\mu$ is an $R$-weight or an $L$-weight.

We \emph{claim} that if $\mu$ is an $R$-weight and $a_1+b_1\ge 2p+q$,  then no $\mu$-deficient $\rho_n^{(\ell)}$ can occur. Actually, if there exists $\rho_n^{(\ell)}=(k_1,\dots,k_p\mid r_1,\dots,r_q)$ which is $\mu$-deficient, then $\mu_\ell=(m_1,\dots,m_p\mid n_1,\dots,n_q)$. Then \eqref{eq7} must hold, that is,
\[(|m_1|-|n_1|)+(p-q)+(N_\ell-M_\ell^+)+1\ge 0.\]
Because $k_1+r_1\le p+q-1$, the above  inequality implies
\[N_\ell-M_\ell^+\ge (a_1+b_1)-(k_1+r_1)+(q-p)-1\ge (a_1+b_1)-2p.\]
Since $N_\ell\le q-1$, we see that
\[ \quad a_1+b_1\le 2p+N_\ell-M_\ell^+ \le 2p+q-1.\]
This contradicts with the assumption that $a_1+b_1\ge 2p+q$ and proves our claim.
Similarly, if $\mu$ is a $L$-weight and $a_1+b_1\ge 2q+p$, then no $\mu$-deficient $\rho_n^{(\ell)}$ can occur. Since no $\mu$-deficient $\rho_n^{(\ell)}$ exists, one can similarly deduce that $\Vert\mu\Vert_{\mathrm{spin}}>\Vert \mu-\beta\Vert_{\mathrm{spin}}$.

Therefore, it remains to consider the following cases:
\begin{itemize}
	\item[$\bullet$] $\mu$ is an $R$-weight such that $a_1+b_1\le 2p+q-1$;
	\item[$\bullet$] $\mu$ is an $L$-weight such that $a_1+b_1\le p+2q-1$.
\end{itemize}

\begin{prop}\label{prop-2p+1}
	For any u-large $\mu$ in \eqref{mu} such that $\mu-\beta$ is $\Delta^+(\frk,\frt)$-dominant, we have
	\begin{itemize}
		\item[(a)] If $b_1\ge 2p+1$, then $\Vert\mu\Vert_{\mathrm{spin}}>\Vert\mu-\beta\Vert_{\mathrm{spin}}$.
		\item[(b)] If $a_1\ge 2q+1$, then $\Vert\mu\Vert_{\mathrm{spin}}>\Vert\mu-\beta\Vert_{\mathrm{spin}}$.
	\end{itemize}
\end{prop}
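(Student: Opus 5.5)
The plan is to argue by contradiction through a minimizing index, using the shifted element $\rho_n^{(L_{\ell,j})}$ of \eqref{eq5} to transfer $\beta$ from one chamber to another. We prove (a); part (b) is the mirror image, with the roles of $p,q$, of $a,b$, of $m,n$ and of \eqref{eq6}, \eqref{eq7} interchanged, and the bound $M_\ell\le r_1$ used in place of $N_\ell\le k_1$.

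Let $i_0$ be an index with $\Vert\mu\Vert_{\mathrm{spin}}=\Vert\mu_{i_0}\Vert_\frk$. If $\rho_n^{(i_0)}$ is not $\mu$-deficient, then
\[
\Vert\mu\Vert_{\mathrm{spin}}=\Vert\mu_{i_0}\Vert_\frk>\Vert\mu_{i_0}-\beta\Vert_\frk\ge\Vert\mu-\beta\Vert_{\mathrm{spin}},
\]
and we are done. So assume $\ell:=i_0$ is $\mu$-deficient, with $\rho_n^{(\ell)}=(k_1,\dots,k_p\mid r_1,\dots,r_q)$ and $\mu_\ell=(m_1,\dots,m_p\mid n_1,\dots,n_q)$. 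It then suffices to produce some index $\ell'$ with $\Vert\mu-\beta-\rho_n^{(\ell')}\Vert_\frk<\Vert\mu_\ell\Vert_\frk$.

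First we pin down the signs. Since $r_1\le p<2p+1\le b_1$, we have $n_1=b_1-r_1\ge p+1\ge1$. By the case analysis before the proposition, this forces $m_1\le0$, i.e.\ $k_1\ge a_1$, so \eqref{eq7} holds. Since $b_1\ge2p$, \eqref{ineq1} also gives $N_\ell\le k_1$. Substituting $|m_1|=k_1-a_1$ and $|n_1|=b_1-r_1$ into \eqref{eq7}, and using $N_\ell\le k_1$, $M_\ell^+\ge0$ and $r_1\le p$, we get
\[
b_1\le p+q+r_1-a_1+1+(N_\ell-k_1)-M_\ell^+ .
\]
So $b_1\ge 2p+1$ leaves very little room. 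The goal is to show that deficiency forces the top part of the $\frp$-block of $\mu_\ell$ to stay large: $n_j\ge1$ for all $j\le k_1$, and in particular $n_{k_1}\ge1$. The intended mechanism is that $r_j=p$ for $j\le q-k_1$ and $r_j\le p$ always, while $b_j\ge b_1-(j-1)$ is not available. So here we must use instead that $N_\ell\le k_1$ together with the slack in \eqref{eq7} to rule out $n_{k_1}\le0$. If $n_{k_1}\le0$, then $|n_{k_1}|$ is small relative to $|n_1|$, and this should push $N_\ell$ down and contradict the displayed inequality combined with $b_1\ge2p+1$.

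Next comes the shift. Suppose $k_1>k_2$ (the generic case). Take $\ell'=L_{\ell,1}$. By \eqref{eq5},
\[
\mu-\beta-\rho_n^{(\ell')}=\mu_\ell-e_{p+1}-e_{p+k_1}
\]
(with $e_{p+1}$ counted twice if $k_1=1$). Since $n_1\ge1$ and $n_{k_1}\ge1$, and the intermediate steps stay nonnegative because we subtract from positive entries, two applications of Lemma \ref{lemma-dominat} give $\{\mu_\ell\}\gg\{\mu-\beta-\rho_n^{(\ell')}\}$. Hence $\Vert\mu-\beta\Vert_{\mathrm{spin}}<\Vert\mu_\ell\Vert_\frk=\Vert\mu\Vert_{\mathrm{spin}}$.

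I expect two steps to be the main obstacle. The first is the degenerate configuration $k_1=k_2=\cdots=k_j>k_{j+1}$. There $L_{\ell,1}$ is not available, and $L_{\ell,j}$ adds $+e_j$, which increases $|m_j|$ whenever $m_j<0$. My first attempt would be to use $a_1-1\ge a_2$, which gives $m_1>m_2\ge\cdots$ on the block where $k_i=q$. This should show that $m_j\ge0$, or else that $|m_j|<|m_1|$, so that the net change $-e_1+e_j$ in the $\frk$-block still does not increase the rearranged vector. Alternatively, I would iterate $L$ to reach an index $\ell'$ with $k'_1>k'_2$. The second obstacle is the estimate $n_{k_j}\ge1$, which must come out of the deficiency inequality \eqref{eq7}.
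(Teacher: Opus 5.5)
\paragraph{Verdict.} There is a genuine gap. Your strategy differs from the paper's and is viable: you pair the $-e_1$ of $\beta$ with the $+e_1$ of a shift and compare $\mu-\beta$ directly. The paper instead shows that a $\mu$-deficient $\rho_n^{(\ell)}$ can never minimize $\Vert\mu-\rho_n^{(\cdot)}\Vert_\frk$, by shifting with $L_{\ell,h}$ where $h$ is the last index with $k_h\ge a_1$; this forces its case split on where $b$ drops below $p$. However, the two steps you call obstacles are the whole proof, and as you set them up both are wrong.

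\paragraph{First gap: wrong index and an unproved positivity claim.} You misread \eqref{eq5}. The $-1$ in the $\frp$-block sits at position $q-k_j+1$, not $k_j$ (compare \eqref{eq-rhoL}), so $\mu-\beta-\rho_n^{(L_{\ell,1})}=\mu_\ell-e_{p+1}-e_{p+q-k_1+1}$. Your target $n_{k_1}\ge1$ does not follow from deficiency. Take $p=1$, $q=5$, $\mu=(1\mid 3,2,2,0,0)$ and $\rho_n^{(\ell)}=(4\mid 1,0,0,0,0)$. Then $\mu_\ell=(-3\mid 2,2,2,0,0)$ and $\Vert\mu_\ell\Vert_\frk^2=\Vert\mu_\ell-\beta\Vert_\frk^2=131$, so $\rho_n^{(\ell)}$ is deficient, yet $n_4=0$. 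Your vector $\mu_\ell-e_{p+1}-e_{p+k_1}$ would therefore increase an absolute value.

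What is needed is $n_{q-k_1+1}\ge1$. The argument for it is not the vague ``push $N_\ell$ down'', but the following.
\begin{itemize}
\item From \eqref{eq7} and $k_1+r_1\le p+q-1$ one gets $N_\ell\ge a_1+1\ge1$; this is \eqref{mu-deficient-necessary-condition}.
\item An index $i\ge2$ counted by $N_\ell$ satisfies $n_i\ge -r_i>-n_1$, since $n_1\ge p+1$. So $|n_i|\ge n_1$ forces $n_i\ge n_1$.
\item This is impossible for $i\le q-k_1$, because there $r_i=r_1$ and $b_i<b_1$. Hence $i\ge q-k_1+1$.
\item Therefore $b_{q-k_1+1}\ge b_i\ge n_1\ge p+1$, which gives $n_{q-k_1+1}\ge p+1-r_{q-k_1+1}$.
\end{itemize}

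\paragraph{Second gap: the degenerate case.} When $k_1=\cdots=k_j>k_{j+1}$, your expectation is backwards. For $2\le i\le j$ one has $m_i=a_i-k_1\le m_1-1\le-1$, so $|m_i|>|m_1|$. Thus $-e_1+e_j$ moves a unit from a larger absolute value to a smaller one. That is not a coordinatewise domination, so Lemma~\ref{lemma-dominat} does not apply. In the situation $|m_j|<|m_1|$ that you hoped for, the norm would actually go up.

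A clean fix is to shift all the way, to $K''=K_\ell-(e_1+\cdots+e_j)$. This is admissible because $k_1-1\ge k_{j+1}$, and $j\le p$ exists because $k_1\ge a_1+1\ge1$. Its $\frp$-block differs from $R_\ell$ only by $+j$ at $t:=q-k_1+1$, so
\[
\mu-\beta-\rho_n^{(\ell'')}=\mu_\ell+(e_2+\cdots+e_j)-e_{p+1}-j\,e_{p+t}.
\]
Since $r_t=p-j$, the bound from the first gap gives $n_t\ge j+1$; if $t=1$, then $n_1=b_1-(p-j)\ge p+j+1$. Also $m_i\le-1$ for $2\le i\le j$. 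Repeated use of Lemma~\ref{lemma-dominat} then yields $\{\mu_\ell\}\gg\{\mu-\beta-\rho_n^{(\ell'')}\}$.

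\paragraph{Conclusion.} With these repairs (the case $j=1$ recovers $L_{\ell,1}$), your argument closes. It is then somewhat shorter than the paper's, since it needs positivity at only one $\frp$-coordinate and avoids the case split $b_k\ge p>b_{k+1}$. As written, though, it does not prove the proposition.
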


\begin{proof}
	We only prove (a). If $a_1\ge q+1$, we are done. Now assume $a_1\le q$. Take any $\mu$-deficient $\rho_n^{(\ell)}=(K_\ell\mid R_\ell)=(k_1,\dots,k_p\mid r_1,\dots,r_q)$ and write
	\[\mu_\ell=(m_1, \dots, m_p\mid n_1, \dots, n_q).\]
Note that $k_1\ge a_1$ and \eqref{eq7} holds, namely,
	\[(k_1+r_1)-(a_1+b_1)+(p-q)+(N_\ell-M_\ell^+)+1\ge 0.\]
	According to the discussion at the end of Section 2, we have $k_1+r_1\le p+q-1$ and $N_{\ell}\leq k_1$. Therefore,
\begin{equation}\label{mu-deficient-necessary-condition}
k_1 \ge a_1+1, \quad N_{\ell} \ge a_1+1.
\end{equation}
Indeed, one deduces that
\[k_1\ge N_\ell-M_\ell^+\ge (a_1+b_1)-(k_1+r_1)+(q-p)-1\ge a_1+1.\]

We \emph{claim} that the spin norm of $\mu$ can not be attained at $\rho_n^{(\ell)}$. Firstly, consider the case $b_q\ge p$.
Indeed, let $h$ be the largest index for which the $h$-th coordinate of $K_\ell$ is at least $a_1$. We first handle the case $h>1$. Then \eqref{eq5} shows that
	\begin{equation}\label{eq-rhoL}
	\rho_n^{(\ell)}-\rho_n^{(L_{\ell, h})}=(\underbrace{0,\dots,0,1}_h,0,\dots,0\mid \underbrace{0,\dots,0,-1}_{q-k_h+1},0,\dots,0).
	\end{equation}
	
	Since $a_i\le a_1-1$ for $2\leq i\leq p$, then $m_h$ must be a negative integer. Notice that $r_j=p$ holds only for $0\le j\le q-k_1$, then $b_q\ge p$ and $q-k_h+1>q-k_1$ implies that $n_{q-k_h+1}$ is a positive integer. Lemma \ref{lemma-dominat} shows that
$$
\{\mu_\ell\}\gg\{\mu_\ell+\rho_n^{(\ell)}-\rho_n^{(L_{\ell, h})}\}=\{\mu_{L_{\ell, h}}\}.
$$
Then $\Vert\mu_\ell\Vert_\frk>\Vert \mu_{L_{\ell, h}}\Vert_\frk$. We call the movement from $\mu_\ell$ to $\mu_{L_{\ell, h}}$ an \emph{operation}.
Now if the $h$-th component of $K_{L_{\ell, h}}$ is still no less than $a_1$, we further \emph{operate} on $K_{L_{\ell, h}}$, and continue the process until the $h$-th component becomes $a_1-1$. At this stage, we are facing with certain $\rho_n^{(\ell')}\in \Omega_{p, q}$ such that  $h-1$ is the largest index of a coordinate of $K_{\ell'}$ that is at least $a_1$, and we continue in this manner until $h=1$.

In the case $h=1$, we carry out the operation whenever $k_1>a_1$. We end up with a $\rho_n^{(\ell\downarrow)}=(K_{\ell\downarrow}\mid R_{\ell\downarrow})\in \Omega_{p, q}$ such that the first component of $K_{\ell\downarrow}$ is $a_1$ and that $\{\mu_\ell\}\gg\{\mu_{\ell\downarrow}\}$.
Note that by \eqref{mu-deficient-necessary-condition}, $\rho_n^{(\ell\downarrow)}$ is \emph{not} $\mu$-deficient.
Thus the claim holds in the case $b_q\ge p$.
	
	More generally, consider the case that $b_{k}\ge p$ and $b_{k+1}<p$. Here $1\leq k<q$. Then
	\begin{equation}\label{ineq2}
	n_1\ge b_1-p\ge p+1>|n_{k+1}|\ge\cdots \ge |n_q|.
	\end{equation}
Note that $k+k_1\le q$ can not happen: otherwise we would have $N_{\ell}=0>a_1+1$, this contradicts the assumption that $\mu-\beta$ is $\Delta^+(\frk, \frt)$-dominant. Therefore,  $k+k_1>q$, which implies that $(q-k_1)+(q-k)<q$. Notice that
\[(\underbrace{|n_1|,\dots,|n_{q-k_1}|}_{q-k_1},\dots,|n_{k}|,\underbrace{|n_{k+1}|,\dots,|n_{q}|}_{q-k}),\]
which together with \eqref{ineq1} and \eqref{ineq2} implies that $N_{\ell}\le k_1-(q-k)$. Therefore, by \eqref{mu-deficient-necessary-condition},
\begin{equation}\label{ineq3}
k_1\ge (q-k)+N_{\ell} \ge a_1+(q-k)+1.
\end{equation}
Note that $a_1+(q-k)\ge q$
can not happen: otherwise \eqref{ineq3} would imply that $k_1\ge q+1$ which is ridiculous. Therefore, $a_1+(q-k)<q$. If there exists an index $h$ such that $k_h\ge a_1+(q-k)$, then
	\[q-k_h+1\le q-[a_1+(q-k)]+1=k-a_1+1\le k.\]
	Thus  $b_{q-k_h+1}\ge b_k\ge p$. Now let $h$ be the largest index for which the $h$-th coordinate of $K_\ell$ is at least $a_1+(q-k)$. The above discussion shows that $m_h$ is a negative integer and $n_{q-k_h+1}$ is a positive integer. Then \eqref{eq-rhoL} and Lemma \ref{lemma-dominat} implies $\Vert\mu_\ell\Vert_\frk>\Vert \mu_{L_{\ell, h}}\Vert_\frk$. This shows that the above operation and its iteration work as in the previous case. That is, there exists a $\rho_n^{(\ell\downarrow)}=(K_{\ell\downarrow}\mid R_{\ell\downarrow})\in \Omega_{p, q}$ such that the first coordinate of $K_{\ell\downarrow}$ is $a_1+(q-k)$ and $\{\mu_\ell\}\gg\{\mu_{\ell\downarrow}\}$. We emphasize that $\rho_n^{(\ell\downarrow)}$ is not $\mu$-deficient by \eqref{ineq3}. Thus the claim holds in this general situation.

Thus the claim always holds. Now if \[\Vert\mu\Vert_{\mathrm{spin}}=\Vert\mu-\rho_n^{(i_0)}\Vert_{\mathfrak{k}},\] then $\rho_n^{(i_0)}$ is not $\mu$-deficient by the claim. That is,
	\[\Vert\mu\Vert_{\mathrm{spin}}=\Vert\mu_{i_0}\Vert_\frk>\Vert\mu_{i_0}-\beta\Vert_\frk\ge \Vert\mu-\beta\Vert_{\mathrm{spin}}.\]
\end{proof}


In view of Proposition \ref{prop-2p+1}, we are left with the following cases:
\begin{itemize}
	\item[($\partial L$)] $2q\ge a_1\ge q+1$, $p\ge b_1\ge 1$, $a_1+b_1\le p+2q-1$.
	\item[($\partial R$)] $q\ge a_1\ge 1$, $2p\ge b_1\ge p+1$, $a_1+b_1\le 2p+q-1$.
\end{itemize}

Since $\mu$ is assumed to be u-large, by Proposition \ref{prop-u-large}, we have
$$
\mu\in\bigcup_{0\leq f\leq p, \, 0\leq g\leq q} \Lambda_{f, g}.
$$
If $f=0$, we have
\[\sum_{j=1}^g b_j>2pg,\]
and hence one must have $b_1\ge 2p+1$. Similarly, if $g=0$, we have
\[\sum_{i=1}^f a_i>2qf,\]
and therefore one must have $a_1\ge 2q+1$. Thus by Proposition \ref{prop-2p+1}, it remains to consider
$$
\mu\in\bigcup_{1\leq f\leq p, \, 1\leq g\leq q} \Lambda_{f, g}.
$$

The technique used in the Proposition 3.1 extends to the following lemma.
\begin{lemma}\label{lemma-down}
Take $\mu\in \Lambda_{f, g}$ for certain $1\leq f\leq p$ and $1\leq g\leq q$.
Let $\rho_n^{(\ell)}=(k_1,\dots,k_p\mid r_1,\dots,r_q)\in\Omega_{p,q}$. Assume that $k_1>a_1$.
	\begin{itemize}
		\item[(a)] If $\mu$ is a $\partial R$-weight, then exists  $\rho_n^{(\ell\downarrow)}=(K_{\ell\downarrow}\mid R_{\ell\downarrow})\in\Omega_{p, q}$ such that $\{\mu_\ell\}\gg\{\mu_{\ell\downarrow}\}$ and that
		\[K_{\ell\downarrow}=(a_1, \underbrace{a_1-1, \dots, a_1-1}_{h-1}, k_{h+1}, \dots, k_p),\]
		where $h$ is the largest index satisfying $k_h\ge a_1$.
		\item[(b)] If $\mu$ is a $\partial L$-weight, then exists $\rho_n^{(\ell\downarrow)}=(K_{\ell\downarrow}\mid R_{\ell\downarrow})\in\Omega_{p, q}$ such that $\{\mu_\ell\}\gg\{\mu_{\ell\downarrow}\}$ and that
		\[R_{\ell\downarrow}=(b_1, \underbrace{b_1-1, \dots, b_1-1}_{h-1}, r_{h+1}, \dots, r_q),\]
		where $h$ is the largest index satisfying $r_h\ge b_1$.
	\end{itemize}
\end{lemma}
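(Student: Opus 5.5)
We prove (a); part (b) is symmetric, with the roles of $(a_i,k_i)$ and $(b_j,r_j)$ exchanged. The plan is to iterate the ``operation'' from the proof of Proposition \ref{prop-2p+1}. Starting from $\rho_n^{(\ell)}$ with $k_1>a_1$, let $h$ be the largest index with $k_h\ge a_1$. We repeatedly pass from the current element of $\Omega_{p,q}$ to $\rho_n^{(L_{\cdot,j})}$, always taking $j$ to be the largest index whose $K$-coordinate is still $\ge a_1$ (except index $1$, which we only lower to $a_1$). The process stops when the $K$-part equals $(a_1,a_1-1,\dots,a_1-1,k_{h+1},\dots,k_p)$. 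Each step lowers $k_j$ by one while keeping $K$ weakly decreasing, because $k_j>k_{j+1}$ whenever $j$ is chosen this way. By \eqref{eq5}, each step changes $\mu_\ell$ by $-e_j$ in the $p$-block and by $+e_{p+k_j}$ in the $q$-block, where $k_j$ is the current value. Since coordinates $h+1,\dots,p$ are never touched, the endpoint has exactly the prescribed $K_{\ell\downarrow}$.

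To get $\{\mu_\ell\}\gg\{\mu_{\ell\downarrow}\}$ it suffices, by Lemma \ref{lemma-dominat} and transitivity of $\gg$, to check at each step two things:
\begin{itemize}
\item[(i)] the $p$-block coordinate $a_j-k_j$ being lowered is $\le 0$ before the step, in fact negative;
\item[(ii)] the $q$-block coordinate $b_{k_j}-r_{k_j}$ being raised is $\ge 0$ before the step, in fact positive.
\end{itemize}
Strictly, Lemma \ref{lemma-dominat} concerns moving one coordinate away from zero. A simultaneous $-e$ on a nonpositive entry and $+e$ on a nonnegative entry can be done one entry at a time, and each half gives domination. So the combined move goes in the wrong direction for $\|\cdot\|_\frk$, and the reverse move from $\mu_{L}$ to $\mu_\ell$ increases it. This is precisely what the proof of Proposition \ref{prop-2p+1} used.

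Condition (i) is easy. For $j\ge 2$ we have $a_j\le a_1-1$ and $k_j\ge a_1$ before the step, so $m_j<0$. For $j=1$ we only step while $k_1>a_1$, so $m_1<0$.

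Condition (ii) is where the $\partial R$ and $\Lambda_{f,g}$ hypotheses enter, and it is the main obstacle. The raised coordinate is $n_t$ with $t=q-k_j+1\le q-a_1+1$, and the relevant $r_t$ equals $p-j+1$ after the preceding steps. So we need $b_t>p-j+1$ for these $t$. In Proposition \ref{prop-2p+1} this came from $b_1\ge 2p+1$ together with the threshold $k$ where $b_k\ge p$. Here $b_1\le 2p$, so I would instead use $\mu\in\Lambda_{f,g}$ with $f,g\ge1$, which gives $\sum_{i\le f}a_i+\sum_{j\le g}b_j>2pq-2(p-f)(q-g)$. Combined with $a_i\le a_1\le q$, this forces $\sum_{j\le g}b_j>2pg-2fg+fq-\ldots$, which should yield $b_t\ge p+1$ for all $t\le q-a_1+1$. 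The key inequality to aim for is
\[b_{q-a_1+1}\ge p+1 .\]
I would try to obtain it by contradiction: if it failed, then bounding $a_i\le a_1$ and $b_j\le p$ for $j>q-a_1$, and $b_j\le b_1\le 2p$ otherwise, should give $S(\mu_{f,g})\le 2pq$ for every $f,g$, contradicting Proposition \ref{prop-u-large}. If only the weaker bound $b_t\ge p-j+2$ comes out, that is still enough for (ii), since $r_t=p-j+1\le p$. Once (ii) is secured, the iteration terminates with the required $K_{\ell\downarrow}$.
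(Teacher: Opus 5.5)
Your proposal is correct. The iteration is the paper's own: operate at the largest index whose $K$-coordinate is still $\ge a_1$, and lower index $1$ only down to $a_1$. What differs is how you obtain the crucial positivity of the $q$-block entry at each step.

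\textbf{How the two routes compare.} The paper proves only $b_{q-a_1+1}\ge p$, by an extremal argument. It first shows $a_1\ge 2$. It then takes the minimal first coordinate $a$ over all $\partial R$-weights in $\Lambda_{p,q}$ (resp.\ $\Lambda_{f,g}$), compares sums against test vectors $\nu,\nu^*,\widetilde\nu$, and climbs from $a$ up to $a_1$. For general $(f,g)$ it also needs the dyadic claim $q-a_1+1\le g$. Your direct contradiction with Proposition \ref{prop-u-large} replaces all of this. It gives the stronger $b_{q-a_1+1}\ge p+1$, uses only u-largeness and $b_1\le 2p$ (not $f,g\ge 1$), and transfers verbatim to (b) under $(p,f,a)\leftrightarrow(q,g,b)$.

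\textbf{The computation you left as ``should give.''} It does hold, and you should write it out. Suppose $b_{q-a_1+1}\le p$ and put $c=q-a_1\ge 0$. Then $\sum_{i\le f}a_i\le fa_1$ and $\sum_{j\le g}b_j\le 2p\min(g,c)+p\max(g-c,0)$. Also $2pq-2(p-f)(q-g)=2pg+2f(q-g)$.
\begin{itemize}
\item[$\bullet$] If $g\le c$, the required inequality is $fa_1\le 2f(q-g)$. This is true since $q-g\ge a_1$.
\item[$\bullet$] If $g>c$, it becomes
\[(f-p)a_1+(p-2f)(q-g)\le 0.\]
This holds because $0\le q-g<a_1$: if $p\ge 2f$ the left side is at most $-fa_1$, and otherwise both terms are $\le 0$.
\end{itemize}
So $S(\mu_{f,g})\le 2pq$ for all $f,g$, and $\mu$ would be u-small.

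\textbf{Sign slip in the step.} By \eqref{eq5}, $\mu_{L_{\ell,j}}=\mu_\ell+(e_j\mid -e_{q-k_j+1})$. So a step \emph{raises} $m_j$ and \emph{lowers} $n_t$, where $t=q-k_j+1$. It is not the reverse, and the $q$-block position is not $p+k_j$. Read literally, your (i) and (ii) combined with Lemma \ref{lemma-dominat} would give $\{\mu_{L}\}\gg\{\mu_\ell\}$, the wrong direction. With the correct signs, the same checks $m_j<0$ and $n_t>0$ show that the move from $\mu_L$ back to $\mu_\ell$ is away from zero in both entries, hence $\{\mu_\ell\}\gg\{\mu_L\}$.

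\textbf{Off-by-one in $r_t$.} Before the step, $k_{j+1}<k_j$ forces $r_t=p-j$, not $p-j+1$. So you only need $b_t\ge p-j+1$. Either way, $t\le q-a_1+1$ gives $b_t\ge b_{q-a_1+1}\ge p+1$, which covers it.
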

\begin{proof}
	We  only prove (a).  Note that $a_1=1$ can not happen. Indeed, we would then have
	\[2pg+2f(q-g)=2pq-2(q-f)(p-g)<\sum_{i=1}^f a_i+\sum_{j=1}^g b_j\le 2+(2p-1)g.\]
  It follows that $g+2f(q-g)<2$. This inequality holds if and only if $g\le 1$ and $f=0$, contradicting the assumption. Therefore, $a_1\ge 2$.

Firstly, take $\mu\in\Lambda_{p, q}$. Then
	\[\sum_{i=1}^pa_i+\sum_{j=1}^qb_j>2pq.\]
	Let $a$ be the minimum of the first components among all $\partial R$-weights in $\Lambda_{p,q}$, then $a\ge 2$. Denote
	\begin{align*}
		\nu&=(a,a-1,\dots,a-1\mid 2p,2p-1,\dots,2p-1), \\
		\nu^*&=(a-1,a-2,\dots,a-2\mid 2p,2p-1,\dots,2p-1).
	\end{align*}
	It is clear that $\nu$ is in $\Lambda_{p,q}$ and $\nu^*$ is not in $\Lambda_{p,q}$. Moreover, $S(\nu)-S(\nu^*)=p$.
Take any $\partial R$-weight $\kappa$ in $\Lambda_{p,q}$ whose first component is $a$. Then
$$
S(\nu^*)\le 2pq< S(\kappa)\leq S(\nu)
$$
If the last component of $\kappa$ is less than or equal to $p-1$, then
$$
S(\kappa)\leq S(\nu)-p=S(\nu)-\big(S(\nu)-S(\nu^*)\big)=S(\nu^*).
$$
This is a contradiction. Thus the last component of $\kappa$ is at least $p$. Hence every $\partial R$-weight in $\Lambda_{p,q}$ with first component $a$ has all of its last $q$ components at least $p$. Furthermore, denote
	\[\widetilde{\nu}=(a+1,a,\dots,a\mid 2p,2p-1,\dots,2p-1).\]
	Then $S(\widetilde{\nu})-S(\nu)=p$.
Take any $\partial R$-weight $\widetilde{\kappa}$ in $\Lambda_{p,q}$ with first component $a+1$. Then
$$
S(\nu^*)<S(\widetilde{\kappa})\leq S(\widetilde{\nu}).
 $$
If the $(n-1)$-th component of $\kappa$ is less than or equal to $p-1$, then
$$
S(\widetilde{\kappa})\leq S(\widetilde{\nu})-2p=S(\nu)-p\leq S(\nu^*).
$$
This is a contradiction. Hence the $(n-1)$-th component of any $\partial R$-weight in $\Lambda_{p,q}$ with first component $a+1$ is at least $p$. This procedure continues until $a$ reaches $a_1$,  which is the first component of $\mu$. When the procedure terminates, we see that $b_{q-(a_1-a)}$ is not less than $p$. Thus $b_{q-a_1+1}\ge b_{q-(a_1-a)}\ge p$.
	
	Since \eqref{eq5} shows that
	\[\rho_n^{(\ell)}-\rho_n^{(L_{\ell, h})}=(\underbrace{0,\dots,0,1}_{h},0,\dots,0\mid 0,\dots,0,\underbrace{-1,0,\dots,0}_{k_{h}}).\]
	Note that the $h$-th component $a_h-k_h$ of $\mu_\ell$ is a negative integer. Moreover, $k_h>0$ implies that the $(n-k_h+1)$-th component of $\rho_n^{(\ell)}$ is less than $p$. Hence, by the above argument and $n-k_h+1\le n-a_1+1$, the $(n-k_h+1)$-th component of $\mu$ is at least $p$. Thus the $(n-k_h+1)$-th component of $\mu_\ell$ is a positive integer. Now, the same argument as in Proposition \ref{prop-2p+1} shows that after a finite number of operations, we obtain the desired result.
	
	Now we turn to the general case. That is, $\mu\in \Lambda_{f, g}$. We \emph{claim} that $q-a_1+1\le g$. It is obviously true if $g=q$. Now assume that $g<q$.
	Indeed, if $g \le q/2$, then \eqref{eq4} shows that
	\[\sum_{i=1}^f a_i+\sum_{j=1}^g b_j>2pq-2(p-f)(q-g) = 2pg+2f(q-g)\ge 2pg+fq.\]
On the other hand,
$$
\sum_{i=1}^f a_i+\sum_{j=1}^g b_j\leq fq+2pg
$$
since each $a_i$ is at most $q$ and each $b_j$ is at most $2p$. This is a contradiction. Therefore, $g>q/2$.
	
	Assume that $3q/4\ge g>q/2$. Then $a_1\le q/2$ can not happen. Indeed, we would then have
	\[\dfrac{1}{2}fq+2pg \ge \sum_{i=1}^f a_i+\sum_{j=1}^g b_j >2pg+2f(q-g).\]
	This implies that $g>\dfrac{3}{4}q$, which is a contradiction. Hence $a_1>q/2$. Then
	\[q-a_1+1\leq\left[\dfrac{1}{2}q \right]+1\le g.\]
	Here for any $x\in\bbR$, $[x]$ denotes the smallest integer which is less than or equal to $x$. Thus the claim holds in this case.
	
	More generally, assume that $(1-1/2^{d+1})q\ge g>(1-1/2^d)q$. Then $a_1\le q/2^d$ can not happen. Indeed, we would then have
	\[\dfrac{1}{2^{d}}fq+2pg\ge \sum_{i=1}^f a_i+\sum_{j=1}^g b_j>2pg+2f(q-g).\]
	This implies that
	$g>(1-1/2^{d+1})q$, which is a contradiction.
	Therefore, $a_1>q/2^{d}$, then
	\[q-a_1+1\le \left[(1-\dfrac{1}{2^d})q \right]+1\le g.\]
	Thus the claim holds in this case.

Since $q$ is a fixed constant, there exists a positive integer $D$ such that $2^D\ge q$. Then
$$
(1-1/2^D)q\ge q-1.
$$
Therefore, the above procedure terminates when $d=D$ and the claim follows. It follows from the claim that $b_{q-a_1+1}\ge b_g$.
	
	Now let $a\ge 2$ be the minimum of the first components of all $\partial R$-weights in $\Lambda_{f,g}$. Put
	\[\nu=(\underbrace{a,a-1,\dots,a-1}_{f},a-2,a-2,\dots,a-2 \mid 2p,2p-1,\dots,2p-1),\]
	then $\nu$ is a $\partial R$-weight in $\Lambda_{f,g}$. Thus $S(\nu_{f, g})>2pq$ by \eqref{sum-mufg}. Construct
	\[\nu^*=(\underbrace{a-1,a-2,\dots,a-2}_{f},a-2,a-2,\dots,a-2 \mid 2p,2p-1,\dots,2p-1),\]
	then $\nu^*$ not in $\Lambda_{f,g}$. Thus $S(\nu_{f, g}^*)\leq 2pq$ by \eqref{sum-mufg}. Note that $S(\nu_{f,g})-S(\nu_{f,g}^*) = f\le p$. Take any $\partial R$-weight $\kappa$ in $\Lambda_{f,g}$ whose first component is $a$. Then
	\[S(\nu^*_{f,g}) < S(\kappa_{f,g}) \le S(\nu_{f,g}).\]
If the $(p+g)$-th component of $\kappa$ is less than or equal to $p-1$, then
\[S(\kappa_{f, g})\leq S(\nu_{f,g})-p\leq S(\nu_{f,g})-f=S(\nu_{f,g})-\big(S(\nu_{f,g})-S(\nu_{f,g}^*)\big)=S(\nu_{f,g}^*).\]
This is a contradiction. Hence the $(p+g)$-th component of any $\partial R$-weight in $\Lambda_{f,g}$ with first component $a$ is at least $p$. In other words, $b_1\ge \dots \ge b_g\ge p$.  Notice that
	\[a_1-a\le q-a< g,\]
where the last inequality follows from the claim.
Proceeding as in the case of $\mu\in \Lambda_{p, q}$, we obtian that $b_{g-(a_1-a)}\ge p$. Thus $b_{g-a_1+1}\ge b_{g-(a_1-a)}\ge p$. Therefore the discussion in the $\Lambda_{p, q}$ case applies here.
\end{proof}

\subsection{The boundary case}

Theorem 1.1 will be proved by means of the following boundary descent proposition.

\begin{prop}\label{prop-boundary}
	Assume that $\mu$ is a $\partial R$-weight or a $\partial L$-weight. Let $\rho_n^{(\ell)}=(k_1, \dots, k_p \mid r_1, \dots, r_q)\in \Omega_{p,q}$ be $\mu$-deficient. Then $\Vert\mu_\ell\Vert_\frk>\Vert\mu\Vert_{\mathrm{spin}}$.
\end{prop}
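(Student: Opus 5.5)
By symmetry (interchanging the roles of $p,q$ and of the two blocks, i.e.\ parts (a) and (b) of Lemma \ref{lemma-down}) I would only treat a $\partial R$-weight $\mu$, with $q\ge a_1$ and $p+1\le b_1\le 2p$. The strategy mirrors the claim in the proof of Proposition \ref{prop-2p+1}. Starting from a $\mu$-deficient $\rho_n^{(\ell)}$, we descend by a chain of operations $\rho_n^{(\ell)}\to\rho_n^{(L_{\ell,h})}\to\cdots$ to some $\rho_n^{(\ell')}$. Each operation strictly decreases $\Vert\mu_\cdot\Vert_\frk$ by Lemma \ref{lemma-dominat}. We stop at a $\rho_n^{(\ell')}$ that is not $\mu$-deficient, or more weakly, one from which we can reach further strictly smaller $\frk$-values. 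Then $\Vert\mu_\ell\Vert_\frk>\Vert\mu_{\ell'}\Vert_\frk\ge\Vert\mu\Vert_{\mathrm{spin}}$, which is the claim. Strictness comes only from needing at least one operation.

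Step 1: extract necessary conditions from deficiency. Since $\mu$ is an $R$-weight, the third case at the start of Section \ref{sec-proof} is excluded. If the first case held we would have $m_1\ge1$ with $k_1<a_1\le q$, which forces $r_1=p<b_1$ and $n_1\ge1$, contradicting deficiency. So we are in the second case: $m_1\le 0$, i.e.\ $k_1\ge a_1$, and \eqref{eq7} holds:
\[(k_1+r_1)-(a_1+b_1)+(p-q)+(N_\ell-M_\ell^+)+1\ge 0.\]
Using $r_1\le p$ and $b_1\ge p+1$, this gives $k_1-a_1\ge (q-p)-(N_\ell-M_\ell^+)$. The goal is to show $k_1>a_1$, so that Lemma \ref{lemma-down}(a) applies. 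If $k_1=a_1$, then $m_1=0$ and $M_\ell^+$ counts all nonzero $m_i$. Also $|n_1|=b_1-r_1\ge1$, and the bound $N_\ell\le q-1$ should contradict \eqref{eq7} once we use that $r_1=p$ (when $k_1<q$). I would check the subcase $k_1=q$ separately; there $a_1=q$ and $r_1$ may be smaller, and a direct count should still make $(|m_1|-|n_1|)+(p-q)+N_\ell-M_\ell^++1<0$ impossible to avoid only in degenerate situations.

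Step 2: apply Lemma \ref{lemma-down}(a). With $k_1>a_1$, it produces $\rho_n^{(\ell\downarrow)}$ with $K_{\ell\downarrow}=(a_1,a_1-1,\dots,a_1-1,k_{h+1},\dots,k_p)$ and $\{\mu_\ell\}\gg\{\mu_{\ell\downarrow}\}$, hence $\Vert\mu_\ell\Vert_\frk>\Vert\mu_{\ell\downarrow}\Vert_\frk\ge\Vert\mu\Vert_{\mathrm{spin}}$. This finishes the argument directly, without even needing $\rho_n^{(\ell\downarrow)}$ to be non-deficient. So the whole proposition reduces to showing $k_1\neq a_1$ for a deficient $\rho_n^{(\ell)}$. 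Alternatively, if $k_1=a_1$ does occur, we need a different strict descent, for instance an operation at some index $h\ge2$ with $k_h\ge a_h+1$, where $m_h<0$ and the paired $n$-coordinate is positive by the bound $b_{q-a_1+1}\ge p$ established inside the proof of Lemma \ref{lemma-down}.

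Main obstacle: the case $k_1=a_1$. Here \eqref{eq7} reads $-|n_1|+(p-q)+N_\ell-M_\ell^++1\ge0$, and one must show that a positive $n_{q-k_h+1}$ paired with a negative $m_h$ still exists. My first attempt would take $h$ to be the largest index with $k_h>a_h$, and use $b_j\ge p$ for $j\le q-a_1+1$ (from the proof of Lemma \ref{lemma-down}) to get positivity of the $n$-coordinate. If no such $h$ exists, then $K_\ell$ agrees with the first block of $\mu$ in a strong sense, and deficiency should contradict $N_\ell\le q-1$ together with $|n_1|\ge1$.
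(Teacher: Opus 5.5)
\textbf{There is a genuine gap: the case $k_1=a_1$ is not handled, and both of your proposed ways out of it are false.}

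Your treatment of $k_1>a_1$ via Lemma \ref{lemma-down}(a) is correct and is the paper's first reduction. But $k_1=a_1$ is the real content of the proposition. It is compatible with deficiency, and deficiency does not force an index with $k_h>a_h$.

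Take $p=2$, $q=3$, $\mu=(2,1\mid 4,3,3)$. It lies in $\Lambda_{2,3}$ (coordinate sum $13>12$), and $\mu-\beta=(1,1\mid 3,3,3)$ is dominant. Since $a_1=2$, $b_1=4$ and $a_1+b_1=6=2p+q-1$, it is a $\partial R$-weight.
\begin{itemize}
\item For $\rho_n^{(\ell)}=(2,1\mid 2,1,0)\in\Omega_{2,3}$ one has $\mu_\ell=(0,0\mid 2,2,3)$ and $\Vert\mu_\ell\Vert_\frk^2=66=\Vert\mu_\ell-\beta\Vert_\frk^2$.
\item So $\rho_n^{(\ell)}$ is $\mu$-deficient with $K_\ell=(a_1,a_2)$ exactly.
\item Here $N_\ell=2=q-1$ and $|n_1|=2$, so nothing is contradicted, and there is no $m_h<0$ to operate on.
\end{itemize}
The descent that does work is $L_{\ell,2}$, to $(2,0\mid 2,1,1)$. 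It gives $\mu_{\ell\downarrow}=(0,1\mid 2,2,2)$ with squared $\frk$-value $60<66$. However, this step turns $m_2=0$ into $1$, so the first block gets \emph{larger*}, and Lemma \ref{lemma-dominat} cannot certify the decrease.

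A separate, smaller gap: even when some $h$ with $k_h>a_h$ exists, your positivity of $n_{q-k_h+1}$ is unjustified. You only know $b_j\ge p$ for $j\le q-a_1+1$, whereas $q-k_h+1$ can be as large as $q-a_h$.

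\textbf{The missing idea.} You need to operate at an index with $m_h=0$ and pay for it with a sufficiently large positive $n$-coordinate. This requires using $\mu\in\Lambda_{f,g}$ directly.
\begin{itemize}
\item The paper puts $c_j=\card\{i\mid q-a_i\ge j\}$ and notes that $(a_1,\dots,a_p\mid c_1,\dots,c_q)\in\Omega_{p,q}$.
\item Combining Lemma \ref{lemma-omegapq} with $\mu\in\Lambda_{f,g}$ gives some $t_0$ with $b_{t_0}\ge p+c_{t_0}+1$.
\item Set $Z=\card\{i\mid m_i=0\}$ and $j_0=\max\{i\mid m_i=0,\ q-a_i<t_0\}$. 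This set contains $1$, since $b_{t_0}\le 2p$ forces $q-a_1<t_0$. Then set $h=\max\{i\mid k_i=k_{j_0}\}$.
\item One gets $k_h>k_{h+1}$, $m_h\le 0$, $t_h:=q-k_h+1\le t_0$, $r_{t_h}=p-h$, and $h\ge j_0\ge Z-c_{t_0}$.
\item Hence $n_{t_h}\ge b_{t_0}-(p-h)\ge Z+1$.
\end{itemize}
Then $L_{\ell,h}$ strictly lowers the $\frk$-value. If $m_h<0$, this follows from Lemma \ref{lemma-dominat}. If $m_h=0$, the first block's squared norm rises by exactly $2Z+1$, while the second block's drops by at least $2n_{t_h}+1$.

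In the example this gives $c=(2,1,0)$, $t_0=3$, $Z=2$, $h=2$ and $n_3=3$, i.e.\ exactly the step $L_{\ell,2}$ above.
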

\begin{proof}
	We only treat the case that $\mu$ is a $\partial R$-weight. Assume $\mu \in \Lambda_{f,g}$, then
\begin{equation}\label{eq9}
\sum_{i=1}^f a_i+\sum_{j=1}^g b_j>2pq-2(p-f)(q-g).
\end{equation}
	By Lemma \ref{lemma-down}, without loss of generality, we assume that $k_1=a_1$ and $k_2\le a_1-1$. Put
	\[A_i=q-a_i, 1\leq i\leq p; \quad c_j=\card\{i\mid A_i\ge j\}, 1\leq j\leq q.\]
	We claim that
	\[\rho_n^{(\ell^*)}:=(a_1,\dots,a_p\mid c_1,\dots,c_q)\in\Omega_{p,q}.\]
	Indeed, it is obvious that $q\ge a_1\ge\cdots\ge a_p\ge 0$ and $p\ge c_1\ge\cdots\ge c_q\ge 0$. For each fixed $i$, it contributes exactly to $c_1,\dots,c_{q-a_i}$, so the total contribution is $q-a_i$. Hence
	\[\sum_{j=1}^qc_j=\sum_{i=1}^p(q-a_i)=pq-\sum_{i=1}^pa_i.\]
	This implies $\sum_{i=1}^pa_i+\sum_{j=1}^qc_j=pq$. Therefore, $\rho_n^{(\ell^*)}\in\Omega_{p,q}$. Lemma \ref{lemma-omegapq} then yields
	\begin{equation}\label{eq8}
	\sum_{i=1}^f a_i+\sum_{j=1}^g c_j\le fq+(p-f)g.
	\end{equation}
	We \emph{claim} that there exists $1\leq t\le q$ such that
	\[b_t\ge p+c_t+1.\]
	Suppose otherwise. Then $b_j\le p+c_j$ holds for every $1\le j\le q$. This implies
	\[\sum_{j=1}^g b_j\le pg+\sum_{j=1}^g c_j.\]
	Substituting this into \eqref{eq8}, we obtain
	\[\sum_{i=1}^f a_i+\sum_{j=1}^g b_j\le fq+(2p-f)g.\]
	On the other hand,
	\[[2pq-2(p-f)(q-g)]-[fq+(2p-f)g]=f(q-g)\ge 0.\]
	Hence
	\[\sum_{i=1}^f a_i+\sum_{j=1}^g b_j\le 2pq-2(p-f)(q-g).\]
	This contradicts \eqref{eq9}. Therefore, the claim holds. That is, there exists $1\leq t_0\le q$ such that $b_{t_0}\ge p+c_{t_0}+1$.
	
	Write $\mu-\rho_n^{(\ell)}=(m_1,\dots,m_p\mid n_1,\dots,n_q)$, and set
	\[G_i=q-k_i, 1\leq i\leq p; \quad Z=\card\{i\mid m_i=0\}.\]
	It is immediate that $m_i=G_i-A_i$. Define
	\[S_{t_0}=\{i\mid m_i=0,A_i<t_0\}.\]
	Recall that $c_{t_0}=\card\{i\mid A_i\ge t_0\}$. Hence there are at most $c_{t_0}$ indices satisfying $A_i\ge t_0$ and $m_i=0$. Thus there are at least $Z-c_{t_0}$ indices $i\in\{1, 2, \dots, p\}$ satisfying $A_i<t_0$ and $m_i=0$. That is,
	\[\card(S_{t_0})\ge Z-c_{t_0}.\]
	Let us show that the set $S_{t_0}$ is nonempty. Since $A_1\le \cdots\le A_p$, if $A_1\ge t_0$, then $c_{t_0}=p$. In this case, $b_{t_0}\ge p+c_{t_0}+1=2p+1$, which contradicting the fact that $\mu$ is a $\partial R$-weight. Therefore, $A_1<t_0$ must hold. Recall that we have assumed that $a_1=k_1$. Thus $m_1=0$. Therefore, $1\in S_{t_0}$.
	
	Denote $j_0:=\max S_{t_0}$,  $g_0:=G_{j_0}=A_{j_0}$. Set
	\[h=\max\,\{i\mid G_i=g_0\}.\]
	Since $G_1\le \cdots\le G_p$, we have
	\[h=p\quad\mbox{or}\quad G_h<G_{h+1}.\]
	 When $h=p$, we have $G_p=g_0=A_{j_0}<t_0\le q$. Thus $G_p<q$. Therefore, $k_p>0=k_{p+1}$. On the one hand, $G_h<G_{h+1}$ is equivalent to $k_h>k_{h+1}$.
	
	Next, we \emph{claim} that $m_h\le 0$. Indeed, since $j_0 \le h$, we have
	\[A_h\ge A_{j_0}=g_0=G_h.\]
Thus
	\[m_h=G_h-A_h\le 0.\]
	
	Since $g_0=A_{j_0}<t_0$, we have that $t_h:=q-k_h+1=G_h+1=g_0+1\le t_0$. Since $k_h > k_{h+1}$, Section \ref{subsec-rhon} shows that $r_{t_h}=p-h$. Because $j_0=\max S_{t_0}$, we have
$$
j_0\ge\card(S_{t_0})\ge Z-c_{t_0}.$$
Hence
	\[h\ge j_0\ge Z-c_{t_0}.\]
	Finally,
	\[n_{t_h}=b_{t_h}-r_{t_h}\ge b_{t_0}-(p-h)\ge (p+c_{t_0}+1)-(p-h)=h+c_{t_0}+1 \ge Z+1.\]
	
	Now let $\rho_n^{(\ell\downarrow)}=\rho_n^{(L_{\ell,h})}$. Then \eqref{eq5} shows that
	\[\rho_n^{(\ell)}-\rho_n^{(\ell\downarrow)}=(\underbrace{0,\dots,0,1}_{h},0,\dots,0 \mid \underbrace{0,\dots,0,-1}_{t_h},0,\dots,0).\]
	If $m_h<0$, then Lemma \ref{lemma-dominat} implies $\{\mu_\ell\}\gg\{\mu_\ell+(\rho_n^{(\ell)}-\rho_n^{(\ell\downarrow)})\}=\{\mu_{\ell\downarrow}\}$, and consequently $\Vert\mu-\rho_n^{(\ell\downarrow)}\Vert_\frk<\Vert\mu-\rho_n^{(\ell)}\Vert_\frk$. If $m_h=0$, then
	\[\Vert\mu_{\ell}\Vert^2_\frk-\Vert\mu_{\ell\downarrow}\Vert^2_\frk\ge[Z^2-(Z+1)^2]+[(n_{t_h}+1)^2-n_{t_h}^2]=2(n_{t_h}-Z)\ge 2.\]
So we  still have
	\[\Vert\mu\Vert_{\mathrm{spin}}\le\Vert\mu_{\ell\downarrow}\Vert_\frk <\Vert\mu_\ell\Vert_\frk.\]
\end{proof}


\begin{proof}[Proof of Theorem \ref{thm-reduction-pencil-sppq}]
As mentioned after Proposition \ref{prop-2p+1},	it suffices to consider the case that $\mu$ is a $\partial R$-weight or a $\partial L$-weight. In this case, let $\Vert\mu\Vert_{\mathrm{spin}}=\Vert\mu-\rho_n^{(i_0)}\Vert_\frk$. By Proposition \ref{prop-boundary}, $\rho_n^{(i_0)}$ is not $\mu$-deficient. Then
	\[\Vert\mu\Vert_{\mathrm{spin}}=\Vert\mu-\rho_n^{(i_0)}\Vert_\frk>\Vert\mu-\rho_n^{(i_0)}-\beta\Vert_\frk\ge\Vert\mu-\beta\Vert_{\mathrm{spin}}.\]
\end{proof}

\medskip


\end{document}